\newcommand{\eps}{\varepsilon}
\newcommand{\R}{\mathbb{R}}
\newcommand{\Q}{\mathbb{Q}}
\newcommand{\C}{\mathbb{C}}
\newcommand{\N}{\mathbb{N}}
\newcommand{\Z}{\mathbb{Z}}
\newcommand{\es}[1]{\begin{equation}\begin{split}#1\end{split}\end{equation}}
\newcommand{\est}[1]{\begin{equation*}\begin{split}#1\end{split}\end{equation*}}
\renewcommand{\mod}[1]{~\pr{\textnormal{mod}~#1}}
\newtheorem*{theo*}{Theorem}
\newtheorem{theo}{Theorem}
\newtheorem{ezer}{Exercise}
\newtheorem{prop}[ezer]{Proposition}
\newtheorem{lemma}{Lemma}
\newtheorem{corol}[lemma]{Corollary}
\newtheorem{remark}{Remark}
\newtheorem*{rem*}{Remark}
\def\sumstar{\operatornamewithlimits{\sum\nolimits^*}}
\newcommand{\pro}[1]{\left( #1\right)}
\newcommand{\pr}[1]{\mathchoice{\left( #1\right)}{(#1)}{(#1)}{(#1)}}
\newcommand{\prbigg}[1]{\bigg( #1\bigg)}
\newcommand{\prBigg}[1]{\Bigg( #1\Bigg)}
\newcommand{\pg}[1]{\left\{ #1\right\}}
\newcommand{\e}[1]{\operatorname{e}\pr{ #1}}
\newcommand{\Gal}{\operatorname*{Gal}}
\newcommand{\Av}{\mathrm{Av}}
\newcommand{\Tr}{\mathrm{Tr}}
\newcommand{\comment}[1]{}
\let\originalleft\left
\let\originalright\right
\renewcommand{\left}{\mathopen{}\mathclose\bgroup\originalleft}
\renewcommand{\right}{\aftergroup\egroup\originalright}
\numberwithin{equation}{section}
\title{On the non-vanishing of certain Dirichlet series}
\author{Sandro Bettin}
\address{DIMA - Dipartimento di Matematica, Via Dodecaneso, 35, 16146 Genova - ITALY}
\email{bettin@dima.unige.it}
\author{Bruno Martin}
\address{
Unit\'e Mixte Internationale 3457 - CNRS \& CRM -  Universit\'e de Montr\'eal - CANADA.
} 
\email{Bruno.Martin@univ-littoral.fr}
\begin{document}

\begin{abstract}
Given $k\in\N$, we study the vanishing of the Dirichlet series 
\est{D_k(s,f):=\sum_{n\geq1} d_k(n)f(n)n^{-s}
}
at the point $s=1$, where $f$ is a periodic function modulo a prime $p$. We show that if $(k,p-1)=1$ or $(k,p-1)=2$ and $p\equiv 3\mod 4$, then there are no odd rational-valued functions $f\not\equiv 0$ such that $D_k(1,f)=0$, whereas in all other cases there are examples of odd functions $f$ such that $D_k(1,f)=0$.

As a consequence, we obtain, for example, that the set of values $L(1,\chi)^2$, where $\chi$ ranges over odd characters mod $p$, are linearly independent over $\Q$.

\end{abstract}

\subjclass[2010]{11M41, 11L03, 11M20 (primary), 11R18 (secondary)}

\keywords{Chowla's problem, non-vanishing of Dirichlet series, Special values of Dirichlet L-series}

\maketitle

\section{Introduction}
Let $p$ be prime and let $K$ be a number field. For a function $f:\Z\to K$ which is periodic modulo $p$, let $L(s,f)$ be the Dirichlet series
\est{
L(s,f):=\sum_{n= 1}^\infty\frac{f(n)}{n^s},
}
which is absolutely convergent for $\Re(s)>1$. Since $L(s,f)=p^{-s}\sum_{a=1}^pf(a)\zeta (s,a/p)$, where $\zeta(s,x)$ is the Hurwitz zeta-function which is meromorphic in $\C$ with a pole of residue $1$ at $s=1$ only, one has that $L(s,f)$ admits meromorphic continuation to $\C$ with (possibly) a simple pole at $s=1$ only of residue $ \Av(f)$ with 
\est{
\Av(f):=\frac1p\sum_{a\mod p}f(a).
}
In particular, if $\Av(f)=0$ then $L(s,f)$ is entire.

In the papers~\cite{Cho1,Cho2} Chowla asked whether it is possible that $L(1,f)=0$ for some rational-valued periodic function $f$ satisfying $\Av(f)=0$ and with $f$ not identically zero.
Following an approach outlined by Siegel, Chowla solved the problem in the case where $f$ is odd by showing that in this case $L(1,f)$ is never zero. Later, Baker, Birch and Wirsing~\cite{BBW} used Baker's theorem on linear forms in logarithms to give a complete answer to Chowla's question showing that $L(1,f)\neq0$ whenever $K\cap\Q(\xi_p)=\Q$, where $\xi_n:=\e{1/n}$ with $\e{x}:=e^{2\pi i x}$. In the following years Chowla's problem was considered and generalized by several other authors, for example we mention the work of Gun, Murty and Rath~\cite{GMR} where other points besides $s=1$ were considered (and where the condition on $K$ was slightly relaxed) and the works of Okada~\cite{Oka} and of Chatterjee and Murty~\cite{CM2014}, who gave equivalent criteria for the vanishing of $L(1,f)$ when no condition on $K$ is imposed. See also~\cite{MM} for a variation of the proof of the result by Baker, Birch and Wirsing.

In this paper we consider the analogue of Chowla's problem for
\est{
D_k(s,f):=\sum_{n=1}^\infty\frac{d_k(n)f(n)}{n^s}=\sum_{n_1,\dots,n_k=1}^\infty\frac{f(n_1\cdots n_k)}{(n_1\cdots n_k)^s},
}
where $d_k(n):=\sum_{m_1\cdots m_k=n}1$. As for $L(s,f)$, $D_k(s,f)$ is absolutely convergent for $\Re(s)>1$ and, expressing each of the series in the second expression for $D_k$ in terms of Hurwitz zeta-functions, one obtains analytic continuation for $D_k(s,f)$ to $\C\setminus\{1\}$. In the case where $k>1$, the analyticity of $D_k$ at $s=1$ is equivalent to having $\Av(f)=0$ and $f(0)=0$ (see Lemma~\ref{conv}). Notice that if $f$ is odd, then both conditions are automatically met.

If $f$ is not odd, then one can easily see that $D_k(1,f)\neq0$ by appealing to Schanuel's conjecture. We remind the reader that Schanuel's conjecture predicts that for any $z_1,\dots,z_n\in\C$ which are linearly independent over $\Q$ the transcendence degree of $\Q(z_1,...,z_n, e^{z_1},...,e^{z_n})$ over $\Q$ is at least $n$. 
\begin{prop}\label{sch}
Let $p\geq3$ be prime and let $k\in\N$. Let $f:\Z\to\overline\Q$ be $p$-periodic with $f(0)=\Av(f)=0.$ Then, under Schanuel's conjecture we have that if $D_{k}(1,f)=0$ then $f$ is odd.
\end{prop}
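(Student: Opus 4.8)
The plan is to express $D_k(1,f)$ as an explicit $\overline\Q$-linear combination of products of logarithms of cyclotomic quantities, and then to apply Schanuel's conjecture to force the coefficient of the ``non-odd part'' of $f$ to vanish. First I would recall the formula $D_k(s,f)=p^{-ks}\sum_{a \mod p} f(a) \sum_{n_1\cdots n_k \equiv a} \prod_i \zeta(s, n_i/p)$ and, using the Laurent expansion $\zeta(s,x) = \frac1{s-1} - \psi(x) + O(s-1)$ (with $\psi$ the digamma function) together with the hypotheses $\Av(f)=0$ and $f(0)=0$ (which by Lemma~\ref{conv} guarantee analyticity at $s=1$), extract the value at $s=1$. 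The outcome is that $D_k(1,f)$ is a $\overline\Q$-linear combination of the numbers $\psi(a_1/p)\cdots\psi(a_j/p)$ for $1\le j\le k$ and $a_i \in \{1,\dots,p-1\}$. By Gauss's digamma theorem, each $\psi(a/p)$ equals a rational number plus a $\overline\Q$-linear combination of $\log(1-\xi_p^b)$, $b=1,\dots,p-1$; hence, after decomposing via the even/odd parts $\log|1-\xi_p^b|$ and $\arg(1-\xi_p^b)$ (equivalently $\log(1-\xi_p^b)+\log(1-\xi_p^{-b})$ and its difference), $D_k(1,f)$ becomes a polynomial, of degree $\le k$, with $\overline\Q$ coefficients, in the $2(p-1)$ real numbers $\{\log|1-\xi_p^b|\}$ and $\{\arg(1-\xi_p^b)\}$ (the latter being rational multiples of $\pi$, hence of $\log\xi_p = 2\pi i$ up to $\overline\Q$-scalars).

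The key point is that these logarithms are algebraically independent over $\overline\Q$ modulo the obvious relations. Concretely: the numbers $2\pi i$ and $\log(1-\xi_p^b)$ for $b=1,\dots,p-1$ are logarithms of algebraic numbers, and the only $\overline\Q$-linear relations among them come from $\prod_{b=1}^{p-1}(1-\xi_p^b)=p$ and from complex conjugation $1-\xi_p^{-b}=-\xi_p^{-b}(1-\xi_p^b)$. Passing to a $\overline\Q$-basis of the space they span, Schanuel's conjecture (applied to this basis of $\overline\Q$-linearly independent logarithms) implies that these basis elements are algebraically independent over $\overline\Q$. Therefore a polynomial identity $D_k(1,f)=0$ with $\overline\Q$ coefficients in these quantities forces every coefficient to vanish. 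Inspecting the top-degree part, or more efficiently the part involving only the ``odd'' quantities $\log|1-\xi_p^b|$, one reads off that the even part $f^+$ of $f$ must satisfy a system forcing $f^+\equiv 0$, i.e. $f$ is odd. (One should double-check that the even part genuinely enters the relevant coefficients non-trivially whenever $f^+\not\equiv0$; this is where the case $k=1$, corresponding to the Baker--Birch--Wirsing situation, matches the classical picture and the general $k$ follows the same mechanism with $\psi$-values replaced by their powers.)

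The main obstacle is the bookkeeping in the second step: extracting, from the product $\prod_i(\tfrac1{s-1}-\psi(n_i/p)+\cdots)$ summed over the $d_k$-type constraint $n_1\cdots n_k\equiv a \pmod p$, the precise value at $s=1$ and verifying it is finite — this requires the vanishing of the coefficients of $(s-1)^{-1},\dots,(s-1)^{-k}$, which is exactly the content of $\Av(f)=0$ and $f(0)=0$ unwound through the multiplicative convolution, and then isolating the constant term cleanly enough to see which monomials in the $\psi$-values carry a factor depending on $f^+$. Once $D_k(1,f)$ is written as an honest polynomial in $\overline\Q$-linearly independent logarithms of algebraic numbers, the application of Schanuel's conjecture is essentially formal, so the real work is entirely in the algebraic manipulation that precedes it; I would organize that computation so that the dependence on $f^+$ is visible in a single, easily-identified family of coefficients.
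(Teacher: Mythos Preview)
Your route through Hurwitz zeta and digamma values can in principle be pushed through, but the paper's argument is far shorter and avoids every difficulty you flag. Using Lemma~\ref{conv} one has immediately
\[
D_k(1,f)=\frac{1}{\varphi(p)}\sumstar_{\chi\bmod p} c_\chi(f)\,L(1,\chi)^k .
\]
Splitting $f=f_{\mathrm{odd}}+f_{\mathrm{even}}$, one has $c_\chi(f_{\mathrm{even}})=0$ for $\chi$ odd and $c_\chi(f_{\mathrm{odd}})=0$ for $\chi$ even; since $L(1,\chi)\in\pi\overline\Q$ for $\chi$ odd (formula~\eqref{effl}), $D_k(1,f_{\mathrm{odd}})\in\pi^k\overline\Q$. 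The paper then invokes the known Schanuel-conditional fact (from~\cite{MM}) that $\pi$ together with the values $L(1,\chi)$ for $\chi$ even non-principal are algebraically independent over $\Q$. The relation $D_k(1,f_{\mathrm{odd}})=-D_k(1,f_{\mathrm{even}})$ then forces $c_\chi(f_{\mathrm{even}})=0$ for every even $\chi$, hence $f_{\mathrm{even}}\equiv0$. No digamma expansion, no Stieltjes constants, no monomial bookkeeping.

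As written, your proposal has two real gaps. First, the assertion that $D_k(1,f)$ is a $\overline\Q$-combination of products $\psi(a_1/p)\cdots\psi(a_j/p)$ is not automatic: the constant term of $\prod_i\zeta(s,r_i/p)$ at $s=1$ involves the higher Stieltjes constants $\gamma_n(r_i/p)$, $n\ge1$ (already for $k=2$ one picks up $\gamma_1(r_1/p)+\gamma_1(r_2/p)$). These do cancel after summing against $f(r_1\cdots r_k)$ under $f(0)=\Av(f)=0$ --- any monomial in which some factor contributes $(s-1)^{-1}$ is independent of that $r_i$, and the sum over $r_i$ kills it --- but you must say so. Second, your even/odd labelling is reversed: $\log|1-\xi_p^b|$ is \emph{even} in $b$ and is precisely the piece attached to $f_{\mathrm{even}}$ (equivalently to even characters), while the cotangent/argument part is odd and lands in $\pi\overline\Q$. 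After correcting this, the step ``one reads off $f^+\equiv0$'' still needs justification: one must show that the degree-$k$ form in the $\log|1-\xi_p^b|$ vanishes only when all $c_\chi(f_{\mathrm{even}})$ do, and the clean way to see that is exactly the character decomposition above --- so you may as well start there.
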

Proposition~\ref{sch} is an easy consequence of the fact that for $\chi$ odd $L(1,\chi)/\pi$ is an algebraic number whereas $\pi$ and the values $L(1,\chi)$, as $\chi$ ranges over even non-principal Dirichlet character mod $p$, are known to be algebraically independent under Schanuel's conjecture. In fact the full Schanuel's conjecture is not needed here, an analogue of Baker's theorem for linear forms in $k$-th powers of logarithms would suffice for Proposition~\ref{sch}.

Thus, at least conditionally, to determine whether $D(1,f)$ can be zero we just need to consider the case of $f$ odd. The case $(k,p-1)=1$ is completely analogous to the case $k=1$ and one has that $D_k(1,f)\neq0$ if $K\cap\Q(\xi_p)=\Q$. If $(k,p-1)>1$ then the situation changes drastically and already for $k=2$ and $p=5$ we can find non-trivial functions $f$ such that $D_2(1,f)=0$. Indeed, if $f$ is the odd $5$-periodic function such that $f(1)=1$, $f(2)=-2$, then $D_2(1,f)=0$. Indeed,
\est{
\sum_{n\in\Z \atop n\equiv 1\mod 5}\frac{d(|n|)}n=2\sum_{n\in\Z \atop n\equiv 2\mod 5}\frac{d(|n|)}n=\frac{4\pi^2}{25\sqrt 5}
}
(cf.~\eqref{id-xr} below), where the sums have to be interpreted as the limits as $X\to\infty$ of their truncations at $|n|\leq X $. 
Similarly, if $f$ is the odd $13$-periodic function such that 
\begin{align*}
&f(1)=18a, && f(4)=18b,&& f(3)=18c\\
& f(2)=19a + 11b + 4c,&& f(8)=-4 a + 19 b + 11 c,&& f(6)=-11 a - 4 b + 19 c
\end{align*}
for any $a,b,c\in\C$, then $D_2(1,f)=0$.
Notice the pattern and that the ordering we chose is not casual: indeed mod $13$ we have $(2^0,2^2,2^4)\equiv (1,4,3)$ and $(2^1,2^3,2^5)\equiv (2,8,6)$, with $2$ a primitive root mod $13$. 

The above examples are far from being unique. Indeed, if $(k,p-1)>1$, then one has no non-trivial solutions to $D_k(1,f)=0$, with $f:\Z\to\Q$ odd and periodic mod $p$, if and only if $(k,p-1)=2$ and $p\equiv 3\mod4$.
We classify the possible cases in the following Theorem, which generalizes the result of Chowla corresponding to the case $k=1$.

\begin{theo}\label{mt}
Let $k\in\N$, $p$ be an odd prime and let $K$ be a number field with $K\cap\Q(\xi_p)=\Q$. Let $V$ be the vector space over $K$ consisting of odd $p$-periodic functions $f:\Z\to K$ and let $V_0$ be the subspace $V_0:=\{f\in V\mid L_k(1,f)=0\}$. Then, 
\es{\label{me}
\dim_K(V_0)\geq 
\begin{cases}
\dim_K(V)\frac{r-1}r & \text{if $v_2(p-1)> v_2(k)$,}\\
\dim_K(V)\frac{r-2}r & \text{if $v_2(p-1)\leq v_2(k)$,}
\end{cases}
}
where $r=(k,p-1)$ and $v_2(a)$ denotes the $2$-adic valuation of $a$.
Moreover, the equality holds if $(k,p-1)\leq 2$ or if $(k,p-1)=4$ and $p\equiv 5\mod 8$. In particular,  $\dim_K(V_0)=0$ if and only if $(k,p-1)=1$ or if $(k,p-1)=2$ and $p\equiv 3\mod 4$.
\end{theo}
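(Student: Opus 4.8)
The plan is to diagonalise $D_k(1,\cdot)$ over the odd characters modulo $p$ and then exploit the Galois action. Extend scalars and write an odd $f\in V\otimes_K\overline\Q$ as $f=\sum_\chi c_\chi\chi$, the sum running over the $(p-1)/2$ odd Dirichlet characters mod $p$ (each of which is primitive, since the principal character is even). As each $\chi$ is completely multiplicative, $\sum_n d_k(n)\chi(n)n^{-s}=L(s,\chi)^k$ as Euler products, hence as entire functions, so $D_k(1,f)=\sum_\chi c_\chi L(1,\chi)^k$. Inserting the classical formula $L(1,\chi)=\tfrac{i\pi}{p}\tau(\chi)B_{1,\bar\chi}$ for $\chi$ odd (with $\tau(\chi)=\sum_{a}\chi(a)\xi_p^a$ and $B_{1,\bar\chi}=\tfrac1p\sum_a\bar\chi(a)a$), this becomes $D_k(1,f)=(i\pi)^k\sum_\chi c_\chi\,\delta_\chi$ with $\delta_\chi:=\tau(\chi)^kB_{1,\bar\chi}^k/p^k$. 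The numbers $\delta_\chi$ are algebraic and non-zero — the non-vanishing is exactly $L(1,\chi)\neq0$ — so the transcendental factor $(i\pi)^k$ is harmless and $D_k(1,f)=0$ is equivalent to the purely algebraic relation $\sum_\chi c_\chi\delta_\chi=0$; no transcendence input is needed here, which is why the theorem, unlike Proposition~\ref{sch}, is unconditional.

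For the Galois step, let $\sigma\in\Gal(\overline\Q/\Q)$ act on $\xi_p$ by $\xi_p\mapsto\xi_p^a$ and on $\mu_{p-1}$ by the $c$-th power. A direct computation gives $\sigma(\tau(\chi))=\overline{\chi^c(a)}\,\tau(\chi^c)$ and $\sigma(B_{1,\bar\chi})=B_{1,\overline{\chi^c}}$, whence $\sigma(\delta_\chi)=\overline{(\chi^k)^c(a)}\,\delta_{\chi^c}$; on the other hand $f$ being $K$-valued forces $\sigma(c_\chi)=c_{\chi^c}$ for every $\sigma\in\Gal(\overline\Q/K)$. Applying each such $\sigma$ to $\sum_\chi c_\chi\delta_\chi=0$ and reindexing yields $\sum_\chi c_\chi\delta_\chi\,\overline{\chi^k(a)}=0$, and since $K\cap\Q(\xi_p)=\Q$ the element $a$ ranges over all of $(\Z/p)^\times$. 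Collecting the odd characters into the fibres $F_\phi:=\{\chi\text{ odd}:\chi^k=\phi\}$ and using the linear independence of the distinct characters $a\mapsto\overline{\phi(a)}$ on $(\Z/p)^\times$, one obtains the clean equivalence
\[
D_k(1,f)=0\qquad\Longleftrightarrow\qquad \sum_{\chi\in F_\phi}c_\chi\delta_\chi=0\ \text{ for every }\phi\in\{\psi^k:\psi\text{ odd}\}.
\]
Thus $V_0$ is cut out by the $N:=\#\{\psi^k:\psi\text{ odd}\}$ linear functionals $f\mapsto\sum_{\chi\in F_\phi}c_\chi\delta_\chi$ on $V$.

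It remains to count $N$ and to control the rank of this system. Since $\psi^k$ depends only on the coset of $\psi$ modulo the group $H$ of characters of order dividing $r=(k,p-1)$, and since $H$ contains an odd character precisely when $(p-1)/r$ is odd, i.e. when $v_2(p-1)\le v_2(k)$, an elementary count gives $N=\frac{p-1}{2r}$ if $v_2(p-1)>v_2(k)$ and $N=\frac{p-1}{r}$ if $v_2(p-1)\le v_2(k)$. As $\dim_K V=\frac{p-1}{2}$, the common kernel of $N$ linear conditions has dimension $\ge\frac{p-1}{2}-N$, which is exactly the right-hand side of~\eqref{me}: this is the asserted lower bound. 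The main obstacle is then the matching upper bound, i.e. the $K$-linear independence on $V$ of these $N$ functionals: because the $F_\phi$ are pairwise disjoint and the $\delta_\chi$ are non-zero, the associated functionals on $\bigoplus_\chi\overline\Q\chi$ are $\overline\Q$-independent, and since $V$ $\overline\Q$-spans $\bigoplus_\chi\overline\Q\chi$ this independence passes to $V$, so that the rank equals $N$ and~\eqref{me} holds with equality; the delicate part, and the reason the statement isolates the cases $(k,p-1)\le2$ and $(k,p-1)=4,\ p\equiv5\bmod8$, is carrying this through cleanly when $\Gal(\overline\Q/K)$ permutes the fibres non-trivially, keeping track of the twist $(\chi^k)^c$ in $\sigma(\delta_\chi)$ and of $L(1,\chi)\neq0$. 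Finally, the "in particular" is immediate: the lower bound in~\eqref{me} is a positive multiple of $\dim_K V$ unless either $v_2(p-1)>v_2(k)$ with $r=1$ or $v_2(p-1)\le v_2(k)$ with $r=2$ — equivalently $(k,p-1)=1$, or $(k,p-1)=2$ and $p\equiv3\bmod4$ — and in those two cases $(k,p-1)\le2$ forces equality, so $\dim_K V_0=0$ there and only there.
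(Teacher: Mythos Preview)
Your approach is quite different from the paper's and, when the gap below is closed, actually proves \emph{more} than the paper establishes.

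The paper never works with the character decomposition directly. It introduces the cotangent sums $x_k(r;p)$, relates $D_k(1,f)$ to $\sum_r f(r)x_k(r;p)$, and then studies the circulant-type matrices $M_{g,r}=(\sigma_g^{i+j}x_k(r;p))_{i,j}$. Equality in~\eqref{me} is reduced to showing $\det M_{g,r}\neq0$, which factors as a product of sums of the shape $\sum_{a\bmod u}\overline{\chi_*(r)}^{va}L(1,\chi_*^{\ell+va})^k$. The authors can only verify non-vanishing of these sums when $(k,p-1)\le2$ or $(k,p-1)=4$, $p\equiv5\bmod8$; they state explicitly (paragraph after Theorem~\ref{mt2}) that equality is expected in general but would require showing such linear combinations of $k$-th powers of $L$-values do not vanish.

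Your character-theoretic route sidesteps this difficulty, but there is a genuine gap in your writeup at the lower-bound step. You assert that ``the common kernel of $N$ linear conditions has dimension $\ge\frac{p-1}{2}-N$'', but the $L_\phi$ are $K$-linear maps $V\to\overline{\Q}$, not $V\to K$. A single such condition can cut down the $K$-dimension by more than one (for instance $a_1+a_2\sqrt{2}=0$ with $a_i\in\Q$ forces $a_1=a_2=0$), so the codimension bound is not automatic. This is precisely the place where one must use that the family $\{L_\phi\}$ is Galois-coherent, and your hand-wave about ``the delicate part'' does not address it.

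The fix, however, is already contained in your own computation $\sigma\cdot L_\phi=\overline{\phi^c(b)}\,L_{\phi^c}$: this shows that the $\overline{\Q}$-span of the $L_\phi$ inside $V^*\otimes_K\overline{\Q}$ is stable under $\Gal(\overline{\Q}/K)$, hence by Galois descent equals $U\otimes_K\overline{\Q}$ for a $K$-subspace $U\subset V^*$ with $\dim_K U=N$. Then $V_0=U^\perp$ and $\dim_K V_0=\dim_K V-N$ \emph{exactly}. Note that this yields equality in~\eqref{me} with no restriction on $(k,p-1)$: once the descent is carried out, nothing is needed beyond $\delta_\chi\neq0$, i.e.\ $L(1,\chi)\neq0$ for odd $\chi$. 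So your argument is incomplete as written, but once the gap is closed it is stronger than the paper's and resolves the question the authors leave open.
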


In the cases $(k,p-1)=2$ and $p\equiv 1\mod 4$ or $(k,p-1)=4$ and $p\equiv 5\mod 8$ we shall also show that $L(1,f)\neq0$ whenever $f\not\equiv 0$ has support entirely contained in the set of square residues mod $p$ (or, analogously, of square non-residues). As a consequence of this and of Theorem~\ref{mt}  we will deduce the following.

\begin{theo}\label{mt2}
Let $k\in\N$, $p$ be an odd prime with either $(k,p-1)\leq 2$ or $p\equiv 5\mod 8$ and $(k,p-1)=4$.
Then the set of values $L(1,\chi)^k$ are linearly independent over $\Q$ for $\chi$ that runs through the odd Dirichlet characters mod $p$. 
Moreover, under Schanuel's conjecture the same result holds true also when $\chi$ varies among all non-principal Dirichlet characters mod $p$.
\end{theo}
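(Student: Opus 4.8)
The plan is to derive Theorem~\ref{mt2} as a fairly direct consequence of Theorem~\ref{mt} together with the supplementary non-vanishing statement announced just before it (the one about functions supported on the square residues). First I would relate $D_k(1,f)$ to the values $L(1,\chi)^k$. Writing an odd $p$-periodic $f:\Z\to K$ in terms of odd Dirichlet characters mod $p$, say $f=\sum_{\chi\ \mathrm{odd}}c_\chi\chi$ with $c_\chi\in\C$, one has by multiplicativity of $d_k$ and complete multiplicativity of $\chi$ that
\est{
D_k(s,f)=\sum_{\chi\ \mathrm{odd}}c_\chi\sum_{n\geq1}\frac{d_k(n)\chi(n)}{n^s}=\sum_{\chi\ \mathrm{odd}}c_\chi\,L(s,\chi)^k
}
for $\Re(s)>1$, and by analytic continuation (all terms are entire since odd characters are non-principal) the identity persists at $s=1$, giving $D_k(1,f)=\sum_{\chi\ \mathrm{odd}}c_\chi L(1,\chi)^k$. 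Hence a $\Q$-linear dependence $\sum_{\chi\ \mathrm{odd}}a_\chi L(1,\chi)^k=0$ with $a_\chi\in\Q$ not all zero would, after choosing $c_\chi=a_\chi$, produce a nonzero $f$ (nonzero because the odd characters are linearly independent as functions) with $D_k(1,f)=0$; note such $f$ is $\overline\Q$-valued, and in fact $\Q(\xi_p)$-valued.

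Next I would reduce to an honest rational-valued function so as to apply Theorem~\ref{mt}. Since the $a_\chi$ are rational, the function $f=\sum a_\chi\chi$ is fixed by $\Gal(\Q(\xi_p)/\Q)$ acting on the characters; averaging over this Galois action (or simply observing that $\sum_\sigma \sigma(f)$ reshuffles the characters but keeps the same relation, because $L(1,\chi^\sigma)$ is the Galois conjugate of $L(1,\chi)$) one can take $f$ to have values in $\Q$ without losing that $D_k(1,f)=0$ and $f\not\equiv0$. Then Theorem~\ref{mt} with $K=\Q$ forces $(k,p-1)>1$, ruling out the case $(k,p-1)=1$. When $(k,p-1)=2$ with $p\equiv3\mod4$, Theorem~\ref{mt} again gives $\dim_\Q V_0=0$, a contradiction, so the only surviving cases are $(k,p-1)=2$ with $p\equiv1\mod4$ and $(k,p-1)=4$ with $p\equiv5\mod8$. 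In those two cases Theorem~\ref{mt} only gives $\dim_\Q V_0=\tfrac12\dim_\Q V$, so I need extra input: here I invoke the supplementary result that $L(1,f)\neq0$ (equivalently $D_k(1,f)\neq0$) for any nonzero $f$ supported on the square residues mod $p$. One checks that $V_0$, being of half-dimension and Galois-stable, must be exactly the span of the odd characters of a fixed parity with respect to the quadratic character — i.e.\ spanned either by the $\chi$ with $\chi$ a square in the character group or by the non-squares; in either case such a nonzero element of $V_0$ has support meeting both square residues and non-residues, and one extracts from it, via the projection onto the square-residue-supported subspace, a nonzero function contradicting the supplementary non-vanishing. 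This forces the $a_\chi$ to vanish, proving $\Q$-linear independence of the $L(1,\chi)^k$ over odd $\chi$.

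For the final sentence, when $\chi$ ranges over all non-principal characters mod $p$ (including the even ones), I would argue under Schanuel's conjecture using Proposition~\ref{sch}: given a rational relation $\sum_{\chi\neq\chi_0}a_\chi L(1,\chi)^k=0$, the corresponding $p$-periodic $f=\sum a_\chi\chi$ satisfies $f(0)=\Av(f)=0$ (the constant character is absent) and $D_k(1,f)=0$, whence Proposition~\ref{sch} shows $f$ is odd, i.e.\ $a_\chi=0$ for all even $\chi$; the odd part is then handled by the unconditional argument above.

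The main obstacle I anticipate is the half-dimensional case: Theorem~\ref{mt} alone is not enough there, and the crux is to pin down $V_0$ exactly and show it cannot be "mixed" across the square/non-square decomposition of the character group, so that the supplementary non-vanishing on square-residue-supported functions can be brought to bear. Making the descent from $\Q(\xi_p)$-valued to $\Q$-valued functions clean (so that Theorem~\ref{mt}'s hypothesis $K\cap\Q(\xi_p)=\Q$ genuinely applies) is a second point that needs care but should be routine via the Galois action.
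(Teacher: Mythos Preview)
Your argument has two real gaps. First, a field mis-identification: the Dirichlet characters mod $p$ take values in $\mu_{p-1}\subset\Q(\xi_{p-1})$, not in $\Q(\xi_p)$. This actually makes the easy cases easier than you think, because $\Q(\xi_{p-1})\cap\Q(\xi_p)=\Q$, so for $(k,p-1)=1$ (and for $(k,p-1)=2$, $p\equiv3\bmod4$) Theorem~\ref{mt} applies directly with $K=\Q(\xi_{p-1})$ to the function $f=\sum_\chi a_\chi\chi$, giving $f\equiv0$ at once; no averaging or descent is required. Your proposed averaging step is both unnecessary and unsafe: nothing prevents $\sum_\sigma\sigma(f)$ from vanishing identically.

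The serious gap is in the half-dimensional cases $(k,p-1)=2$, $p\equiv1\bmod4$ and $(k,p-1)=4$, $p\equiv5\bmod8$. Knowing only that $\dim V_0=\tfrac12\dim V$ and that $V_0$ meets neither the square-residue-supported subspace nor its complement does not let you conclude that a particular $f=\sum a_\chi\chi$ lies outside $V_0$; your claimed description of $V_0$ as ``the span of characters of a fixed parity with respect to the quadratic character'' is neither justified nor correct, and projecting $f$ onto the square-residue-supported part does not a priori yield an element of $V_0$. The paper supplies exactly the missing step: using the explicit formula $L(1,\chi)=\tfrac{\pi}{2p}\sum_m\chi(m)\cot(\pi m/p)$, the relation $\sum a_\chi L(1,\chi)^k=0$ becomes an \emph{algebraic} identity in $\Q(\xi_p,\xi_{p-1})$; applying the automorphism of this field that fixes $\Q(\xi_p)$ and sends $\xi_{p-1}\mapsto\xi_{p-1}^{1+(p-1)/2}$ replaces each odd $\chi$ by $(\tfrac{\cdot}{p})\chi$, yielding the companion relation $\sum a_\chi L(1,(\tfrac{\cdot}{p})\chi)^k=0$. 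Only then do $f_\pm=\sum a_\chi(1\pm(\tfrac{\cdot}{p}))\chi$ lie in $V_0$, and the supplementary non-vanishing on square-residue-supported functions forces $f_\pm\equiv0$. Your Schanuel reduction via Proposition~\ref{sch} is fine.
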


It seems likely that the equality in~\eqref{me} (as well as a suitable modification of Theorem~\ref{mt2}) holds true with no conditions on $(k,p-1)$; in order to prove this one would need to show that certain explicit linear combinations of $k$-th powers of Dirichlet $L$-functions are non-zero.

At first sight Theorem~\ref{mt} doesn't seem to say anything about the interesting case of the odd part of the Estermann function at $s=1$, $D_{\sin}(1,a/p):=\sum_{n\geq 1}\frac{d(n)\sin(2\pi na/p)}n$, where $(a,p)=1$. Indeed, the number field generated by $\sin(2\pi/p),\dots,\sin(2\pi (p-1)/2p))$ has a non-trivial intersection with $\Q(\xi_p)$. 
However in fact one has (see \cite{CW} Hilfsatz 14 or  \cite{lBT} Theorem 4.4)  
$D_{\sin}\pr{1,\frac ap}=-\pi\sum_{n\geq1}\frac{B(an/p)}{n},
$ with 
$B(x)= \{ x \}-\frac12$ for $x\notin\Z$ and $B(x)=0$ otherwise, 
where $\{x\}$ denotes the fractional part of $x$.  
Thus, the non-vanishing of  $D_{\sin}(1,\frac ap)\neq0$ follows directly from Chowla's result (i.e. Theorem~\ref{mt} with $k=1$).

The proof of Theorem~\ref{mt} is in fact a variation of Chowla's proof in~\cite{Cho2}. In this proof he showed that the values $\cot(\frac{\pi}p),\dots,\cot(\frac{\pi(p-1)}{2p})$ are linearly independent over $\Q$  by proving that if $g$ is a generator of $(\Z/p\Z)^*$ then the determinant of the matrix $(\cot(\frac \pi p g^{2(i+j)}))_{1\leq i,j\leq ({p-1})/2}$ is a non-zero multiple of the relative class number $h_p^-$. One then obtains the result on the non-vanishing of $L(1,f)$ from the fact that $L(1,f)$ can be written as a linear combination in $\cot(\frac{\pi}p),\dots,\cot(\frac{\pi(p-1)}{2p})$.

In our case, the analogue of the cotangent function is given by the sums
\est{
x_k(r;p):=\frac{1}{p^k}\sumstar_{m_1,\dots,m_k\mod p\atop m_1\cdots m_k\equiv r\mod p}\cot\pr{\pi\tfrac {m_1}p}\cdots \cot\pr{\pi \tfrac{m_k}{p}},
}
for $p,k\in\N$ and $r\in\Z$, and where $\sumstar$ indicates that the sum is restricted to coprime moduli mod $p$. Note that $r\mapsto x_k(r,p)$ is odd. 
We notice that $x_k(r;p)$  is reminiscent of several other arithmetic objects. For example in the case $k=2$ (and ignoring the difference in the normalizations) if we replace $m_1m_2\equiv r$ by  $m_1\equiv r m_2$, we obtain the Dedekind sum, whereas if we replace one of the cotangents $\cot(\frac{\pi x}p)$ by its discrete Fourier transform (i.e. essentially the fractional part $\{\frac xp\}$), then we obtain the Vasyunin sum (for which see e.g.~\cite{Vas,BC}). The closest analogy, however, is with the hyper-Kloosterman sum $K_k(r;p)$, which is obtained by replacing $\cot(\frac{\pi x}p)$ by $\e{\frac xp}$. Indeed, for $k$ even both $x_k(\cdot \, ; p)$ and $K_k(\cdot \, ; p)$ take values in the real cyclotomic field $\Q(\xi_p)^+$, where $\Q(\xi_n)^+:=\Q(\xi_n+\xi_n^{-1})$, and behave in the same way with respect to the action of the Galois group $\Gal(\Q(\xi_p)/\Q)$ (for $k$ odd $x_k(r,p)\in\Q(\xi_{4p})^+$). 
More precisely, and analogously to what happen for the Kloosterman sums, it is easy to see (c.f. Corollary~\ref{field}) that if $H$ is the subgroup of order $(k,p-1)$ of $\Gal(\Q(\xi_p)/\Q)\sim (\Z/p\Z)^*$, then $i^k x_k(r;p)$ is in $\Q(\xi_p)^H$, the subfield fixed by $H$. If one could show that $x_k(r;p)\neq \pm x_k(\ell;p)$ for all $r\not\equiv \pm \ell\mod p$, than one would obtain that each of the values $x_k(r;p)$ for $(r,p)=1$ generates the aforementioned fixed fields.  We refer to~\cite{Fis,Wan} for some results on the algebraic properties of $K_k(\cdot\, ; p)$ related to this and to Theorem~\ref{algt} below. 

\begin{theo}\label{algt}
Let $p$ be a prime, let $k\in\N$ and let $K$ be a number field such that $K\cap\Q(\xi_p)=\Q$.
Then the values $x_k(1;p),\dots, x_k((p-1)/2;p)$ are linearly independent over $K$ if and only if $(k,p-1)=1$ or if $(k,p-1)=2$ and $p\equiv 3\mod 4$. 
If $p\equiv 1\mod 4$ and $(k,p-1)=2$ or if $p\equiv 5\mod 8$ and $(k,p-1)=4$, then the values of each of the sets $S_{\pm}:=\{x_k(r;p)\mid r\leq \frac{p-1}2, (\frac rp)=\pm1\}$ are linearly independent over $K$, where  $(\frac rp)$ is the Legendre symbol. 
\end{theo}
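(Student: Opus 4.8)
The plan is to follow Chowla's strategy via determinants, but now for the matrix built from the values $x_k(r;p)$. Let $g$ be a primitive root mod $p$ and let $r=(k,p-1)$. The key observation is that, since $x_k(\cdot;p)$ is odd and (by Corollary~\ref{field}) $i^k x_k(\cdot;p)$ lies in the fixed field $\Q(\xi_p)^H$ where $H$ is the subgroup of order $r$ of $(\Z/p\Z)^*$, the value $x_k(g^a;p)$ depends only on the class of $a$ modulo $r$ together with its residue modulo $2$ (i.e.\ on $a$ modulo $\mathrm{lcm}(r,2)$), apart from an overall sign coming from oddness. So the $\frac{p-1}2\times\frac{p-1}2$ Gram-type matrix $\bigl(x_k(g^{i+j};p)\bigr)$ has rank controlled by the number of distinct values, which is essentially $\frac{p-1}{2}$ divided by the appropriate period. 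Quantitatively I expect the rank to be $\frac{p-1}{2}\cdot\frac{r-1}{r}$ when $v_2(p-1)>v_2(k)$ and $\frac{p-1}{2}\cdot\frac{r-2}{r}$ otherwise, which matches~\eqref{me}; hence the values are linearly independent over $\Q$ (and over $K$, using $K\cap\Q(\xi_p)=\Q$) exactly when this rank is full, i.e.\ $r=1$, or $r=2$ with $v_2(p-1)>v_2(k)=0$ forcing nothing new but $r=2$ and $p\equiv3\bmod4$ giving $v_2(p-1)=1>v_2(k)$; wait—one must check the dichotomy carefully, and indeed $r=2$, $p\equiv1\bmod4$ has $v_2(p-1)\geq2>1=v_2(k)$ so the "good" case would apply, which is why the finer analysis below is needed.

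The main computation, then, is to evaluate (or at least show nonvanishing of) the relevant minor of $\bigl(x_k(g^{i+j};p)\bigr)$. The approach is to diagonalize over the characters of $(\Z/p\Z)^*$: writing $\cot(\pi m/p)$ in terms of additive characters $\e{mn/p}$ and then decomposing multiplicatively, one finds that $x_k(r;p)$ has a "Fourier expansion" $x_k(r;p)=\frac{(-1)^{?}}{\text{(something)}}\sum_{\chi \text{ odd}}\overline\chi(r)\,\bigl(\tau(\chi)/\text{stuff}\bigr)\,L(1,\chi)^{k}$ — more precisely, since $\sum_{n\geq1}\chi(n)\cot(\pi n/p)/\text{normalization}$ is a classical evaluation equal to a multiple of $L(1,\overline\chi)$ for $\chi$ odd (and vanishes for $\chi$ even), the multiplicative convolution structure of $x_k$ gives $x_k(r;p)=c_{p,k}\sum_{\chi(-1)=(-1)^k}\overline\chi(r)\,(\text{Gauss sum factor})\,L(1,\chi)^k$. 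Plugging this into the determinant, the matrix becomes (conjugate to) a diagonal matrix with the nonzero entries indexed by characters $\chi$ with $\chi(-1)=(-1)^k$ that are nontrivial on $H$ — these are exactly the characters whose contribution survives — with diagonal entries proportional to $\tau(\chi)^{?}L(1,\chi)^k$. So the minor is a product of $L(1,\chi)^k$'s times a power of a Gauss sum times a nonzero rational, and the whole problem reduces to: \emph{is this product nonzero?} Since each $L(1,\chi)\neq0$ (this is classical, from $L(1,\chi)\ne0$ for $\chi$ odd, or one can cite it), the product is always nonzero, and the only question is which characters appear — i.e.\ the combinatorics of when $\chi|_H$ is trivial — which is precisely what produces the case distinction in the theorem and in~\eqref{me}.

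For the second statement, about the sets $S_\pm$, the point is that in the borderline cases $r=2,\ p\equiv1\bmod4$ and $r=4,\ p\equiv5\bmod8$ the characters that "drop out" of the full determinant are exactly the ones trivial on $H$; but $H$ contains $-1$ (since $r$ is even and $v_2(p-1)\geq v_2(r)$... actually here $v_2(p-1)>v_2(r)$ fails, that is the content), so the even characters split the residue classes into squares and non-squares, and restricting the support of $f$ to squares (resp.\ non-squares) mod $p$ amounts to projecting onto the span of $\{x_k(r;p): (\tfrac rp)=+1\}$ (resp.\ $-1$). On that subspace the relevant square-class characters reappear as genuinely distinct, and the corresponding sub-minor is again a nonzero product of $L(1,\chi)^k$ times a Gauss-sum factor; hence linear independence of each $S_\pm$. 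Concretely I would (i) record the exact Fourier formula for $x_k(r;p)$ as a lemma, (ii) compute the determinant of $\bigl(x_k(g^{i+j};p)\bigr)_{1\le i,j\le (p-1)/2}$ and of its natural sub-blocks by the character-diagonalization above, reading off the rank, (iii) deduce~\eqref{me} and the "if and only if", and (iv) handle $S_\pm$ by the same computation restricted to the $\pm1$ eigenspace of the permutation $r\mapsto (\text{square})\cdot r$. The hard part will be step (ii): bookkeeping the Gauss-sum normalizations and the sign $i^k$ correctly, and—more seriously—verifying that no \emph{unexpected} linear relations arise among the surviving $L(1,\chi)^k$ beyond those forced by the Galois action; but since here we only claim a determinant is a nonzero algebraic number (a product of nonvanishing $L$-values and Gauss sums), not algebraic independence, this last worry does not actually bite, and the proof goes through unconditionally in the stated cases.
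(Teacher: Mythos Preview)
Your overall strategy---forming a Chowla-type matrix from the $x_k(r;p)$'s and diagonalizing via characters---is the paper's approach, and it does work for the first assertion: in the cases $(k,p-1)=1$ or $(k,p-1)=2$, $p\equiv 3\bmod 4$, the relevant determinant is indeed a nonzero constant times a product of individual $L(1,\chi)^k$'s (Corollaries~\ref{fcd1} and~\ref{fcd1b}), hence nonzero. (Two quibbles: your rank discussion conflates the rank with $\dim V_0$, which by~\eqref{me} is the dimension of the \emph{kernel}; and the matrix one actually needs is not $(x_k(g^{i+j};p))$ but the one arising from the Galois action, whose rows are indexed by $g^{kj}$ rather than $g^j$---this is why the paper works with the smaller matrices $M_{g,r}$ of Lemmas~\ref{det1} and~\ref{det2}.)

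The genuine gap is in the $S_\pm$ statement. You claim the sub-minor is ``again a nonzero product of $L(1,\chi)^k$ times a Gauss-sum factor'' and that therefore the worry about linear relations ``does not actually bite''. This is precisely where it does bite. For $(k,p-1)=2$, $p\equiv 1\bmod 4$, the determinant governing $S_\pm$ is, by Corollary~\ref{fcd2}, a nonzero constant times
\[
\prod_{\substack{0<\ell<(p-1)/2\\ \ell \text{ odd}}}\Bigl(L(1,\chi_*^\ell)^k \pm L\bigl(1,(\tfrac{\cdot}{p})\chi_*^\ell\bigr)^k\Bigr),
\]
and similarly for $(k,p-1)=4$, $p\equiv 5\bmod 8$ (Corollary~\ref{fcd2b}). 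Each factor is a \emph{sum or difference} of two $k$-th powers of $L$-values at distinct odd characters, and $L(1,\chi)\neq 0$ alone does not rule out $L(1,\chi)^k=\mp L(1,\chi')^k$. The paper handles this by writing $k=2k'$ (resp.\ $k=4k'$), factoring each term over $\Q(i)$ (resp.\ $\Q(\xi_8)$) into linear combinations of $L(1,\chi)^{k'}$'s, and then invoking the linear independence of those lower powers over $\Q(\xi_{p-1})$ already established in the $(k',p-1)=1$ case of Theorem~\ref{mt2}. This bootstrap from the previously proven case is the substantive content of the $S_\pm$ claim, and your proposal explicitly dismisses it.
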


We also mention that, as all the other aforementioned sums, $x_k(r;p)$ has some nice arithmetic features. For example, for $p\equiv 3\mod 4$, $k=2$ and $(r,p)=1$ one has 
\est{
\Tr_{\Q(\xi_p)^+/\Q}(x_k(r;p))=8 \pr{\frac rp}h(-p)^2 p^{- 1},
}
where $h(-p)$ is the class number of $\Q(\sqrt{-p})$ (c.f. Corollary~\ref{classnum}).

We conclude this introduction by giving an alternative ``analytic'' expression of $x_k(r;p)$, which is what will allow us to prove the above Theorems. Also, from this formula one can easily deduce the asymptotic for the moments of $x_k(r;p)$.
\begin{prop}\label{pass}
Let $k\in\N$, $p$ be a prime and $r\in\Z$ with $(r,p)=1$. Then
\es{\label{id-xr}
x_k(r;p)=\frac 12\pro{\frac{2}{\pi }}^k \sum_{n\in\Z\atop n\equiv r\mod p}\frac{d_k(|n|)}{n}.
}
In particular, if $f:\Z\to\C$ is odd and periodic modulo $p$, then
\es{\label{fff1}
D_k(1,f)=2\pro{\frac{\pi }2}^k\sum_{r=1}^{(p-1)/2}f(r)x_k(r;p).
}
\end{prop}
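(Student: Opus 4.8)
The plan is to expand everything in Dirichlet characters mod $p$, where the $k$-fold structure decouples into $k$-th powers of $L(1,\chi)$, the only genuinely analytic ingredient being the identification of a symmetric sum over $\Z$ with a continued value of $D_k$. To that end, first note that since $f$ is odd we have $\Av(f)=f(0)=0$, so by Lemma~\ref{conv} the series $\sum_n d_k(n)f(n)/n$ converges to $D_k(1,f)$; hence $D_k(1,f)=\lim_{X\to\infty}\sum_{n\le X}d_k(n)f(n)/n$. For $(r,p)=1$ let $h_r$ be the $p$-periodic function with $h_r(n)=\mathbf{1}[n\equiv r\bmod p]-\mathbf{1}[n\equiv -r\bmod p]$; it is odd, every odd $p$-periodic $f$ satisfies $f=\sum_{1\le r\le (p-1)/2}f(r)h_r$, and, splitting the class $|n|\le X$, $n\equiv r\bmod p$, into its positive and negative parts, the symmetric sum in \eqref{id-xr} equals $\sum_{n\equiv r}d_k(|n|)/n=\lim_{X\to\infty}\sum_{n\le X}d_k(n)h_r(n)/n=D_k(1,h_r)$. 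Thus, by linearity of $D_k(1,\cdot)$, both \eqref{id-xr} and \eqref{fff1} reduce to the single identity $D_k(1,h_r)=2(\pi/2)^k\,x_k(r;p)$.

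Next I would expand both quantities over $(\Z/p\Z)^*$. Writing $\mathbf{1}[m\equiv c\bmod p]=\frac1{p-1}\sum_{\chi\bmod p}\bar\chi(c)\chi(m)$ for $(c,p)=1$, the oddness of $h_r$ leaves only odd characters: $h_r=\frac2{p-1}\sum_{\chi\text{ odd}}\bar\chi(r)\chi$. Since $d_k$ is the $k$-fold Dirichlet convolution of $\mathbf{1}$ and characters are completely multiplicative, $D_k(s,\chi)=L(s,\chi)^k$ for $\Re(s)>1$, and (both sides being entire for $\chi$ non-principal) $D_k(1,h_r)=\frac2{p-1}\sum_{\chi\text{ odd}}\bar\chi(r)L(1,\chi)^k$. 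On the other side, inserting $\mathbf{1}[m_1\cdots m_k\equiv r\bmod p]=\frac1{p-1}\sum_\chi\bar\chi(r)\prod_i\chi(m_i)$ into the definition of $x_k(r;p)$ and using that the resulting $k$-fold sum over reduced residues factors, one gets $x_k(r;p)=\frac1{p^k(p-1)}\sum_\chi\bar\chi(r)\,T(\chi)^k$, where $T(\chi):=\sumstar_{m\bmod p}\chi(m)\cot(\pi m/p)$.

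Everything then reduces to evaluating this cotangent Gauss sum: $T(\chi)=0$ for $\chi$ even (immediate, as $m\mapsto\cot(\pi m/p)$ is odd) and $T(\chi)=\frac{2p}{\pi}L(1,\chi)$ for $\chi$ odd. This is precisely the finite-Fourier dual of Chowla's case $k=1$, i.e.\ of the classical partial-fraction identity $\sum_{n\equiv r}n^{-1}=\frac\pi p\cot(\pi r/p)$: combining it with the $k=1$ case of the character expansion above gives $\frac\pi p\cot(\pi r/p)=\frac2{p-1}\sum_{\chi\text{ odd}}\bar\chi(r)L(1,\chi)$, and multiplying by $\chi(r)$ and summing over $r$ yields $T(\chi)=\frac{2p}\pi L(1,\chi)$ for odd $\chi$. (Alternatively, expand $\cot(\pi m/p)=i+\frac{2i}{p}\sum_{a=1}^{p-1}a\,\xi_p^{am}$, use $\sum_m\chi(m)\xi_p^{am}=\bar\chi(a)\tau(\chi)$, and invoke the classical $L(1,\chi)=\frac{i\pi\tau(\chi)}{p}B_{1,\bar\chi}$ for odd $\chi$.) Substituting $T(\chi)^k=(2p/\pi)^kL(1,\chi)^k$ then collapses the expression for $x_k(r;p)$ to $\frac{(2/\pi)^k}{p-1}\sum_{\chi\text{ odd}}\bar\chi(r)L(1,\chi)^k=\frac12(2/\pi)^kD_k(1,h_r)$, which is the required identity, and hence \eqref{id-xr}; \eqref{fff1} follows by taking $\sum_r f(r)(\cdot)$.

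I do not expect a deep obstacle: the proof is an assembly of classical facts (the cotangent partial fraction, or equivalently the $L(1,\chi)$ Gauss-sum formula, together with $D_k(s,\chi)=L(s,\chi)^k$), with two bookkeeping points to watch. The sum defining $x_k(r;p)$ must run over reduced residues, so that $\chi(m_1\cdots m_k)=\prod_i\chi(m_i)$ and the $k$-fold character sum genuinely factors; and the symmetric sum over $n\in\Z$ must be correctly matched with the value $D_k(1,h_r)$ of the continued Dirichlet series at $s=1$, which rests on Lemma~\ref{conv} (equivalently, on the classical bound $\sum_{n\le X}d_k(n)\chi(n)\ll_{k,\chi}X^{1-1/k}(\log X)^{k-1}$ for non-principal $\chi$ together with partial summation).
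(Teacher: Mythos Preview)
Your proof is correct and follows essentially the same route as the paper: both sides are expanded in Dirichlet characters mod $p$, the computation hinges on the identity $\sumstar_{m\bmod p}\chi(m)\cot(\pi m/p)=\tfrac{2p}{\pi}L(1,\chi)$ for odd $\chi$ (and vanishing for even $\chi$), and one then matches the two character expansions. The only differences are organizational: you introduce the auxiliary function $h_r$ and meet in the middle, whereas the paper runs the computation linearly from the $n$-sum to $x_k$; and you are somewhat more explicit about the convergence issue at $s=1$ than the paper, which deliberately suppresses it (see the Remark preceding its proof).
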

\begin{corol}\label{moments}
Let $p,k,m\in\N$ with $p$ be prime. Then for $m\geq2$ even we have
\est{
\sum_{r\mod p}x_k(r;p)^m=
\pro{\frac{2^{k-1}}{\pi^{k}}}^{m}\sum_{n\geq1}\frac{d_k(n)^{m}}{n^{m}}
+O_{m,k,\eps}(p^{-1+\eps}),
}
whereas for $m$ odd the left hand side is trivially $0$.

\end{corol}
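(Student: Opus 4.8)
The plan is to read off the moment directly from the formula for $x_k(r;p)$ in Proposition~\ref{pass}, isolating a diagonal term. If $m$ is odd the statement is immediate, since $r\mapsto x_k(r;p)$ is odd and hence $\sum_{r\bmod p}x_k(r;p)^m=0$. So assume $m\ge 2$ is even, and put $c_k:=\tfrac12(2/\pi)^k$, so that by \eqref{id-xr} one has $x_k(r;p)=c_k\sum_{n\equiv r\,(p)}d_k(|n|)/n$ for $(r,p)=1$, and $x_k(r;p)=0$ for $p\mid r$ (the $m_i$ in the defining sum, hence also their product, being coprime to $p$). Raising to the $m$-th power, summing over $r$, and interchanging — legitimate since each inner sum is the symmetric limit of its truncations to $|n|\le X$ — gives
\[
\sum_{r\bmod p}x_k(r;p)^m=c_k^m\lim_{X\to\infty}\ \sum_{\substack{n_1,\dots,n_m\in\Z,\ p\nmid n_1\\ n_1\equiv\cdots\equiv n_m\,(p),\ |n_i|\le X}}\frac{d_k(|n_1|)\cdots d_k(|n_m|)}{n_1\cdots n_m}.
\]
The diagonal $n_1=\cdots=n_m$ contributes $c_k^m\sum_{n\in\Z,\ p\nmid n}d_k(|n|)^m/n^m=2c_k^m\sum_{n\ge1,\ p\nmid n}d_k(n)^m/n^m$ (as $m$ is even); since $m\ge 2$ makes $\sum d_k(n)^m/n^m$ converge absolutely, dropping the condition $p\nmid n$ costs only $O_{m,k}(p^{-m})$, and this is the asserted main term (with $2c_k^m=2(2^{k-1}/\pi^k)^m$).

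The work is in the off-diagonal part. The key observation is that two distinct integers in the same residue class mod $p$ differ by at least $p$, so every non-diagonal tuple has some $|n_i|\ge p/2$. To use this, let $\tilde r$ be the representative of $r$ with $|\tilde r|\le (p-1)/2$ and split $\sum_{n\equiv r\,(p)}d_k(|n|)/n=d_k(|\tilde r|)/\tilde r+E_r$, so $\sum_{r\bmod p}x_k(r;p)^m=c_k^m\sum_r(d_k(|\tilde r|)/\tilde r+E_r)^m$. In the binomial expansion the $j=0$ term equals $c_k^m\sum_{1\le|s|\le(p-1)/2}d_k(|s|)^m/s^m$, which agrees with the diagonal main term up to $O(p^{-1+\eps})$; the remaining terms each carry at least one factor $E_r$, so the whole point becomes the uniform tail bound $|E_r|\ll_{k,\eps}p^{-1+\eps}$. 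To prove it, write $E_r=\sum_{t\ge1}\big(\tfrac{d_k(|\tilde r+pt|)}{\tilde r+pt}-\tfrac{d_k(|\tilde r-pt|)}{\tilde r-pt}\big)$; the part coming from $\tfrac1{\tilde r+pt}-\tfrac1{\tilde r-pt}=O(p/(pt)^2)$ is $\ll p^{-1+\eps}$ at once, while the part coming from $d_k(|\tilde r+pt|)-d_k(|\tilde r-pt|)$ is treated by Abel summation, its partial sums $\sum_{t\le T}(d_k(|\tilde r+pt|)-d_k(|\tilde r-pt|))$ being $\ll T^{1+\eps}p^\eps$ trivially (enough for $T\le p$) and $\ll T^{1-\delta}p^\eps$ for some $\delta=\delta(k)>0$ when $T>p$, by the equidistribution of $d_k$ in the two progressions $\equiv\pm\tilde r\pmod p$, whose leading terms coincide so only the error terms survive. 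Feeding $|E_r|\ll p^{-1+\eps}$ back in, and using $\sum_r(d_k(|\tilde r|)/|\tilde r|)^{m-j}\ll p^\eps$, $(\log p)^k$, $p$ according as $j\le m-2$, $j=m-1$, $j=m$, each term with $j\ge1$ is $\ll p^{-1+\eps}$ since $m\ge2$; summing up gives the formula.

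The main obstacle is exactly this tail estimate, i.e.\ bounding $E_r$ uniformly in $r$ and $p$: the short range is elementary, but the long range requires a power-saving error term for $d_k$ in arithmetic progressions of modulus $p$, uniform for $p$ up to about the square root of the summation length — classical for $k=2$, and available for larger $k$ from standard estimates. Alternatively, one may recast everything through $x_k(r;p)=\tfrac1{p-1}(2/\pi)^k\sum_{\chi\ \mathrm{odd}}L(1,\overline\chi)^k\chi(r)$ (immediate from Proposition~\ref{pass} and \eqref{fff1}), which rewrites $\sum_{r\bmod p}x_k(r;p)^m$ as a constant times $\sum\prod_{i=1}^m L(1,\overline{\chi_i})^k$ over $m$-tuples of odd characters with $\chi_1\cdots\chi_m=\chi_0$, reducing the off-diagonal estimate to mean values of $L(1,\chi)$.
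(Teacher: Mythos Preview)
Your approach is genuinely different from the paper's, and it is worth comparing the two.

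The paper does \emph{not} try to bound the tail $E_r$ pointwise. Instead, it first invokes the approximate functional equation for $L(s,\chi)^k$ (a direct consequence of the functional equation) to replace
\[
2\Big(\tfrac{\pi}{2}\Big)^k x_k(r;p)=\sum_{\substack{n\in\Z\setminus\{0\}\\ n\equiv r\ (p)}}\frac{d_k(|n|)}{n}\,g\!\left(\frac{|n|}{p^{2k}}\right)+O_{k,\eps}(p^{-5k/2}),
\]
with $g$ a fixed smooth cutoff supported on $[0,2]$. After raising to the $m$-th power and summing over $r$, the off-diagonal is now a \emph{finite} sum over tuples $(n_1,\dots,n_m)$ with all $|n_i|\le 2p^{2k}$, all $n_i$ congruent mod $p$, and not all equal. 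One then bounds it in an entirely elementary way: in any such tuple some $|n_i|>p/2$, and a short computation gives the whole thing $\ll_{k,m,\eps}p^{-1+\eps}$. No equidistribution input is needed.

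Your route, by contrast, hinges on the pointwise estimate $E_r\ll p^{-1+\eps}$, which you reduce (after the easy ``smooth'' part) to
\[
S(T):=\sum_{t\le T}\big(d_k(\tilde r+pt)-d_k(pt-\tilde r)\big)\ll T^{1-\delta}p^{\eps}\qquad(T>p).
\]
This is exactly a power-saving error term for $d_k$ in two residue classes mod $p$ in the range $x=pT>p^{2}$, i.e.\ modulus $p<x^{1/2}$. For $k=2$ it is indeed classical; but for general $k$ it is not a ``standard estimate'' in the sense you suggest. A contour shift with the convexity bound gives only $S(T)\ll T^{(k+2)/(k+4)}p^{(k-2)/(k+4)}$, and feeding this into your Abel summation yields a contribution of size $p^{2k/(k+4)-2}$ to $E_r$, which is $\ll p^{-1}$ only for $k\le 4$. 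For $k\ge 5$ one needs genuine extra input (mean-value theorems for $L(s,\chi)^k$, subconvexity, or---coming full circle---the approximate functional equation). So the step you label ``available for larger $k$ from standard estimates'' is precisely where the real work lies, and as written it is a gap for large $k$.

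In short: your decomposition is natural and the bookkeeping with the binomial expansion is fine, but the uniform tail bound $E_r\ll p^{-1+\eps}$ is strictly harder than what the corollary requires, and your justification of it does not go through for all $k$. The paper sidesteps this entirely by truncating first; your own final paragraph (recasting via $L(1,\chi)^k$ and character orthogonality) is in fact much closer to what the paper does, and would have been the cleaner line to pursue.
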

\subsubsection*{Acknowledgement}
This works was started when both authors were visiting the Centre de Recherches Math\'ematiques in Montr\'eal. Both authors want to thank this institution for providing excellent working conditions.  Also, we thank St\'ephane Louboutin for useful suggestions. The work of the first author is partially supported by PRIN ``Number Theory and Arithmetic Geometry". The second author was a member of UMI 3457 supported by CNRS funding and is also supported by the ANR (grant ANR-14CE34-0009 MUDERA). 
\section{The sum $x_k(r;p)$}
\begin{lemma}\label{algeb}
Let $p\geq3$ be prime and let $k\in\N$. Then, $i^k x_k(r;p)\in\Q(\xi_p)$. More precisely, if $k$ is even then $ x_k(r;p)\in\Q(\xi_p)^+$, whereas if $k$ is odd then  $x_k(r;p)\in\Q(\xi_{4p})^+$. 
Moreover, for all $(c,p)=1$, let $\sigma_c$ be the automorphism of $\Q(\xi_{p})$ sending $\xi_{p}\mapsto \xi_p^{c}$. Then, $\sigma_c(i^k x_k(r;p))=i^k x_k(c^kr;p)$ for all $r\in\Z$.
\end{lemma}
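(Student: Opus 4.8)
The plan is to rewrite the cotangent in terms of $p$-th roots of unity and then read off all three assertions essentially by inspection. The key identity is
\[
\cot(\pi x)=i\,\frac{e^{2\pi i x}+1}{e^{2\pi i x}-1},
\]
so that for $(m,p)=1$ one has $i\cot\!\pr{\pi\tfrac mp}=-\dfrac{\xi_p^{m}+1}{\xi_p^{m}-1}\in\Q(\xi_p)$, the denominator being nonzero precisely because $p\nmid m$. Multiplying $k$ such factors gives $i^k\cot\!\pr{\pi\tfrac{m_1}p}\cdots\cot\!\pr{\pi\tfrac{m_k}p}=\prod_{j=1}^k\!\pr{i\cot\!\pr{\pi\tfrac{m_j}p}}\in\Q(\xi_p)$, and summing over the tuples with $m_1\cdots m_k\equiv r\mod p$ yields $i^k x_k(r;p)\in\Q(\xi_p)$ immediately.

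For the sharper field statement I would use that $x_k(r;p)\in\R$, being a rational multiple of a sum of products of real numbers. If $k$ is even then $i^k=(-1)^{k/2}=\pm1$, hence $x_k(r;p)\in\Q(\xi_p)\cap\R=\Q(\xi_p)^+$, the last equality holding because $\Q(\xi_p)^+$ is the maximal real subfield of $\Q(\xi_p)$ and $[\Q(\xi_p):\Q(\xi_p)^+]=2$. If $k$ is odd then $i^k=\pm i$, so $x_k(r;p)\in i^{-k}\Q(\xi_p)\subseteq\Q(i,\xi_p)=\Q(\xi_{4p})$ (using $(4,p)=1$), and again $x_k(r;p)\in\Q(\xi_{4p})\cap\R=\Q(\xi_{4p})^+$.

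For the Galois action I would simply apply $\sigma_c$ to the above expression for $i^k x_k(r;p)$. Since $\sigma_c(\xi_p^{m})=\xi_p^{cm}$ we get $\sigma_c\!\pr{i\cot\!\pr{\pi\tfrac mp}}=-\dfrac{\xi_p^{cm}+1}{\xi_p^{cm}-1}=i\cot\!\pr{\pi\tfrac{cm}p}$, where $cm$ may be reduced modulo $p$ because $\cot(\pi\cdot)$ has period $1$. Therefore $\sigma_c(i^k x_k(r;p))=\frac{1}{p^k}\sumstar_{m_1\cdots m_k\equiv r}\prod_{j}\!\pr{i\cot\!\pr{\pi\tfrac{cm_j}p}}$, and the change of variables $m_j\mapsto cm_j\bmod p$ — a bijection of $(\Z/p\Z)^*$ — turns the constraint $m_1\cdots m_k\equiv r$ into $m_1\cdots m_k\equiv c^k r$, giving exactly $i^k x_k(c^k r;p)$.

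There is no real obstacle in this argument; the only points requiring a moment's care are the well-definedness of the reindexing (the period-$1$ invariance of $\cot(\pi\cdot)$, which also makes $r\mapsto x_k(r;p)$ depend only on $r\bmod p$) and the two standard facts $\Q(\xi_p)\cap\R=\Q(\xi_p)^+$ and $\Q(i,\xi_p)=\Q(\xi_{4p})$ for $p$ odd.
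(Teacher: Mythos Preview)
Your proof is correct and follows essentially the same route as the paper's: rewrite $\cot(\pi x)$ as $i\,\dfrac{\e{x}+1}{\e{x}-1}$ to see that $i^k x_k(r;p)\in\Q(\xi_p)$, intersect with $\R$ using $\Q(i,\xi_p)=\Q(\xi_{4p})$ for the field refinement, and then apply $\sigma_c$ termwise with a reindexing of the summation variables to obtain the Galois-action formula. The only cosmetic difference is that the paper phrases the reindexing as $m_i\to\overline c\,m_i$ rather than relabelling $m_j\mapsto c m_j$, but this is the same bijection of $(\Z/p\Z)^*$ viewed from the other side.
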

\begin{proof}
By definition we have
\est{
x_k(r;p):=\frac{i^k}{p^k}\sum_{ m_1\cdots m_k\equiv r\mod p}\frac{\e{\tfrac {m_1}p}+1}{\e{\tfrac {m_1}p}-1}\cdots \frac{\e{\tfrac { m_k}p}+1}{\e{\tfrac { m_k }p}-1},
}
and so $x_k(r;p)\in i^k\Q(\xi_p)$. Now we have $\Q(\xi_p,i)=\Q(\xi_{4p})$ and so, since $x_k(r;p)\in\R$, we have $x_k(r;p)\in\Q(\xi_p)\cap\R=\Q(\xi_p)^+$ for $k$ even and $x_k(r;p)\in\Q(\xi_{4p})^+$ for $k$ odd.
Also,
\est{
\sigma_c(i^kx_k(r;p))&=\frac{(-1)^k}{p^k}\sum_{ m_1\cdots m_k\equiv r\mod p}\frac{\e{ \tfrac {cm_1}p}+1}{\e{\tfrac {cm_1}p}-1}\cdots \frac{\e{\tfrac {c m_k}p}+1}
{\e{\tfrac {c m_k }p}-1}=i^k x_k(c^kr;p),
}
by making the change of variables $m_i\to  \overline cm_i$ for each $i=1,\dots,k$. 
\end{proof}
\begin{corol}\label{field}
Let $k,p\in\N$ with $p$ prime and let $r\in\Z$. Then, $i^k x_k(r;p)\in\Q(\xi_p)^H$, where $H$ is the subgroup of order $(k,p-1)$ of $\Gal(\Q(\xi_p)/\Q)$.
\end{corol}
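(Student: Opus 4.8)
The plan is to read off the statement directly from Lemma~\ref{algeb}, the only real work being to match the Galois-theoretic description of the fixing group with the subgroup $H$ of order $(k,p-1)$. First I would record the elementary fact that $x_k(r;p)$ depends on $r$ only through its residue class modulo $p$; this is immediate from the defining sum $x_k(r;p)=\frac1{p^k}\sumstar_{m_1\cdots m_k\equiv r\ (p)}\cot(\pi m_1/p)\cdots\cot(\pi m_k/p)$, since the constraint $m_1\cdots m_k\equiv r\mod p$ only sees $r\mod p$.

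Next I would invoke Lemma~\ref{algeb}, which gives $i^kx_k(r;p)\in\Q(\xi_p)$ together with the formula $\sigma_c(i^kx_k(r;p))=i^kx_k(c^kr;p)$ for every $c$ with $(c,p)=1$. Combining this with the previous observation, $\sigma_c$ fixes $i^kx_k(r;p)$ as soon as $c^k\equiv 1\mod p$. Under the standard isomorphism $\Gal(\Q(\xi_p)/\Q)\cong(\Z/p\Z)^*$, the set $\{\sigma_c : c^k\equiv1\mod p\}$ is exactly the kernel of the $k$-th power map on $(\Z/p\Z)^*$. Since $(\Z/p\Z)^*$ is cyclic of order $p-1$, this kernel is the unique subgroup of order $(k,p-1)$, i.e.\ it is $H$.

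Putting these together, every element of $H$ fixes $i^kx_k(r;p)$, hence $i^kx_k(r;p)\in\Q(\xi_p)^H$, which is the claim. There is essentially no obstacle here: the content is entirely contained in Lemma~\ref{algeb}, and the only point requiring a word of justification is the purely group-theoretic fact that the $k$-th roots of unity in the cyclic group $(\Z/p\Z)^*$ form the subgroup of order $(k,p-1)$. If one wants to be slightly more careful one can also note in passing that this shows $H$ is precisely the fixing group when $r$ is coprime to $p$ and $x_k$ takes distinct values on distinct classes $\pm r$, but that refinement is not needed for the stated inclusion.
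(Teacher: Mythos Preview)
Your proof is correct and follows exactly the same approach as the paper, which simply notes that $\Gal(\Q(\xi_p)/\Q)\cong(\Z/p\Z)^*$ is cyclic (so $H$ is well defined) and that the result then follows immediately from Lemma~\ref{algeb}. You have merely spelled out the details of that implication.
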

\begin{proof}
It's well known that $\Gal(\Q(\xi_p)/\Q)\sim(\Z/p\Z)^*$ is cyclic so $H$ is well defined, the Corollary then follows immediately from Lemma~\ref{algeb}.
\end{proof}

We now give a proof of Proposition~\ref{pass} and then compute the trace of $x_k(r;p)$.
\begin{remark}
For the sake of simplicity we shall ignore convergence issues when manipulating the order of summation of conditionally convergent series. One could make every step rigorous in several ways, for example by some analytic continuation arguments, or by using the ``approximate'' functional equations for the various series (in the form of exact formulae).
\end{remark}
\begin{proof}[Proof of Proposition~\ref{pass}]
We have
\es{\label{mfor}
\sum_{n\in\Z\atop n\equiv r\mod p}\frac{d_k(|n|)}{n}&=\frac1{\varphi(p)}\sum_{\chi\mod p}\overline\chi(r)\sum_{n\in\Z_{\neq0}}\frac{d_k(|n|)\chi(n)}{n}\\
&=\frac1{\varphi (p)}\sum_{\chi\mod p}\overline\chi(r)(1-\chi(-1))L(1,\chi)^k.
}
Now, if $\chi$ is a primitive odd character, then we have
\begin{align*}
L(1,\chi)&=\frac{\pi i\tau(\chi)}p\sumstar_{a\mod p}\overline\chi(a)\pg{\frac ap} 
=  \frac{\pi i\tau(\chi)}p\sumstar_{a\mod p}\overline\chi(a)
{B\pr{\frac ap}}\\ 
&=\frac{\pi i}p\sumstar_{a,m\mod p}\overline\chi(a)
{B\pr{\frac ap}}\chi(m)\e{\frac {m}p},
\end{align*}
where $\tau(\chi)=\sum_{m \mod p} \chi(m)\e{m/p}$ is the Gauss sum (see~\cite{Was}, p. 36). Now, for $(m,p)=1$ we have
\est{
\sumstar_{a\mod p}
{B\pr{\tfrac{a}{p}}}\e{\tfrac{ma}p}=-\frac i2\cot\pr{\pi \tfrac{ m}p}
}
and so, after making the change of variables $m\to ma$, we have
\es{\label{effl}
L(1,\chi)=\frac{\pi }{2p}\sumstar_{m\mod p}\chi(m)\cot\pr{\pi \tfrac{ m}p}.
}
Noticing that the right hand side is zero if $\chi$ is even, we then have
\est{
\sum_{n\in\Z\atop n\equiv r\mod p}\frac{d_k(|n|)}{n}=\frac 2{\varphi(p)}\prbigg{\frac{\pi }{2p}}^k\sum_{\chi\mod p}\overline\chi(r)\prBigg{\sumstar_{m\mod p}\chi(m)\cot\pr{\pi \tfrac{ m}p}}^k.
}
The result then follows by expanding the power and exploiting the orthogonal relation for Dirichlet characters.
\end{proof}
\begin{corol}\label{classnum}
Let $k\in\N$ and $p\geq3$ be prime; let $r\in\Z$ with $(r,p)=1$. Then if $v_2(p-1)> v_2(k)$, then $\Tr_{\Q( \xi_{p})/\Q}(i^kx_k(r;p))=0$. Otherwise, we have
$$\Tr_{\Q(\xi_{p})^+/\Q}(x_k(r;p))=\frac 14\pro{\frac{2}{\pi }}^k\sumstar_{\chi\mod p\atop\chi^k=1}(1-\chi(-1))\overline\chi(r)L(1,\chi)^k
.$$ 
In particular, if $(k,p-1)=2$ and $p\equiv 3\mod 4$, then 
\est{
\Tr_{\Q(\xi_{p})^+/\Q}(x_k(r;p))=2^{2k-1}\pro{\frac rp}h(-p)^k p^{-  k/2}.
}
\end{corol}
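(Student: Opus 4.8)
The plan is to bootstrap everything from Lemma~\ref{algeb} together with the character expansion already obtained in the course of proving Proposition~\ref{pass}. First I would record that, combining~\eqref{id-xr} with~\eqref{mfor}, for every $t$ with $(t,p)=1$ one has
\est{
x_k(t;p)=\frac{1}{2\varphi(p)}\pr{\frac{2}{\pi}}^{k}\sum_{\chi\mod p}\overline\chi(t)\pr{1-\chi(-1)}L(1,\chi)^{k}.
}
Next, by Lemma~\ref{algeb} the conjugates of $i^{k}x_k(r;p)$ under $\Gal(\Q(\xi_p)/\Q)\cong(\Z/p\Z)^{*}$ are precisely the values $i^{k}x_k(c^{k}r;p)$, so
\est{
\Tr_{\Q(\xi_p)/\Q}\pr{i^{k}x_k(r;p)}=i^{k}\sumstar_{c\mod p}x_k(c^{k}r;p).
}
Substituting the formula for $x_k(c^{k}r;p)$ and summing over $c$, the inner sum collapses via $\sumstar_{c\mod p}\overline\chi^{k}(c)=\varphi(p)\,\mathbf 1[\chi^{k}=1]$; and since $(\Z/p\Z)^{*}$ is cyclic, $\chi^{k}=1$ is equivalent to $\operatorname{ord}(\chi)\mid(k,p-1)$. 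This produces
\est{
\Tr_{\Q(\xi_p)/\Q}\pr{i^{k}x_k(r;p)}=\frac{i^{k}}{2}\pr{\frac{2}{\pi}}^{k}\sumstar_{\chi\mod p\atop \chi^{k}=1}\pr{1-\chi(-1)}\overline\chi(r)L(1,\chi)^{k},
}
which is the engine for all three assertions.

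For the first assertion, I observe that when $v_{2}(p-1)>v_{2}(k)$ one has $v_{2}\pr{(k,p-1)}=v_{2}(k)<v_{2}(p-1)$, so every $\chi$ with $\chi^{k}=1$ has order dividing $(p-1)/2$ and is therefore even; hence every factor $1-\chi(-1)$ vanishes and the trace is $0$. For the second assertion, the hypothesis $v_{2}(p-1)\le v_{2}(k)$ forces $k$ even (since $v_{2}(p-1)\ge1$), whence $i^{k}=(-1)^{k/2}$ and $i^{k}x_k(r;p)=(-1)^{k/2}x_k(r;p)$; as $x_k(r;p)\in\Q(\xi_p)^{+}$ by Lemma~\ref{algeb} and $[\Q(\xi_p):\Q(\xi_p)^{+}]=2$, one has $\Tr_{\Q(\xi_p)/\Q}(x_k(r;p))=2\,\Tr_{\Q(\xi_p)^{+}/\Q}(x_k(r;p))$, and dividing the last display by $2(-1)^{k/2}$ yields the stated formula for $\Tr_{\Q(\xi_p)^{+}/\Q}(x_k(r;p))$.

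For the last assertion one specializes: when $(k,p-1)=2$ and $p\equiv3\mod4$, the only characters with $\chi^{k}=1$ are the principal one, which drops out because $\chi_{0}(-1)=1$, and the Legendre symbol $\psi=\pr{\tfrac{\cdot}{p}}$, which is odd since $\pr{\tfrac{-1}{p}}=-1$; thus only $\psi$ survives, with $1-\psi(-1)=2$ and $\overline\psi(r)=\pr{\tfrac{r}{p}}$, giving $\Tr_{\Q(\xi_p)^{+}/\Q}(x_k(r;p))=\tfrac12\pr{\tfrac{2}{\pi}}^{k}\pr{\tfrac{r}{p}}L(1,\psi)^{k}$. Substituting the analytic class number formula $L(1,\psi)=\pi h(-p)/\sqrt{p}$ (valid for $p>3$; the case $p=3$ is trivial) and simplifying converts this into the asserted multiple of $\pr{\tfrac{r}{p}}h(-p)^{k}p^{-k/2}$. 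I do not expect a genuine obstacle: the whole argument is bookkeeping with character orthogonality and the Galois action of Lemma~\ref{algeb}. The only points that require care are the equivalence $\chi^{k}=1\iff\operatorname{ord}(\chi)\mid(k,p-1)$ together with the $2$-adic valuation comparison that decides whether any odd character enters the sum, and keeping track of the scalar $i^{k}$ and of the index $[\Q(\xi_p):\Q(\xi_p)^{+}]=2$ when passing between the two trace maps.
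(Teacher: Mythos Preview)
Your proof is correct and follows essentially the same approach as the paper: both compute the trace via Lemma~\ref{algeb} as $i^{k}\sum_{c}^{*}x_k(c^{k}r;p)$ and then expand in characters using~\eqref{mfor}. The only cosmetic difference is that the paper handles the case $v_2(p-1)>v_2(k)$ by a direct change-of-variables symmetry (pairing $j$ with $j+(p-1)/2^{v_2(k)+1}$ to see the sum equals its own negative) before passing to characters, whereas you first derive the unified character formula and then observe that every $\chi$ with $\chi^{k}=1$ is even in that regime; the two arguments are equivalent and your version is arguably tidier.
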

\begin{proof}
By Lemma~\ref{algeb} and Proposition~\ref{pass}, given a generator $g$ of $(\Z/p\Z)^*$ we have
\est{
\Tr_{\Q(\xi_{p})/\Q}(i^k x_k(r;p))=\sum_{j=1}^{p-1}\sigma_{g^j}(i^k x_k(r;p))=\frac {i^k}2\pro{\frac{2}{\pi }}^k \sum_{j\mod {p-1}}\sum_{n\in\Z\atop n\equiv r g^{kj}\mod p}\frac{d_k(|n|)}{n}.
}
Now, if $v_2(p-1)> v_2(k)$, writing $k=2^{v_{2}(k)}k'$ and making the change of variables $j\to\frac{p-1}{2^{v_{2}(k)+1}}+j$ we have that the condition on the sum over $n$ becomes $n\equiv rg^{kj+k'\frac{p-1}2}\equiv -rg^{kj}\mod p$. Thus, making the change $n\to -n$ we obtain the opposite of the original sum and so $\Tr_{\Q(\xi_{p})/\Q}(i^kx_k(r;p))=0$. Now assume $v_2(p-1)\leq v_2(k)$, notice that in particular $k$ is even.
By~\eqref{mfor} we have
\est{
\sum_{j\mod {p-1}}\sum_{n\in\Z\atop n\equiv rg^{kj}\mod p}\frac{d_k(|n|)}{n}=\frac1{\varphi (p)}\sum_{j\mod {p-1}}\sumstar_{\chi\mod p}\overline\chi(r)\overline\chi(g)^{kj}(1-\chi(-1))L(1,\chi)^k.
}
Taking the sum over $j$ inside, we obtain $0$ unless $\overline\chi(g)^{k}=1$, i.e. if $\chi$ is a character of order dividing $(k,p-1)$. Thus,
\est{
\Tr_{\Q(\xi_{p})^+/\Q}(x_k(r;p))=\frac12\Tr_{\Q(\xi_{p})/\Q}(x_k(r;p))=\frac 14\pro{\frac{2}{\pi }}^k\sumstar_{\chi\mod p\atop\chi^k=1}(1-\chi(-1))\overline\chi(r)L(1,\chi)^k.
}
If $(k,p-1)=2$ and $p\equiv 3\mod 4$, then the only character contributing to the sum is the quadratic character $\pr{\frac{\cdot }{p}}$ and so 
\est{\Tr_{\Q(\xi_{p})^+/\Q}(x_k(r;p))=\frac 12\pro{\frac{2}{\pi }}^k\pro{\frac rp}L(1,\pr{\frac{\cdot }{p}})^k=2^{2k-1}\pro{\frac rp}h(-p)^k p^{- k/2},
} 
by the class number formula.
\end{proof}

We now give a proof of Corollary~\ref{moments}. We don't give many details as the proof is very similar (and actually a bit simpler than) to the proof of Theorem~1 of~\cite{Bet}.
\begin{proof}[Proof of Corollary~\ref{moments}]
Let $\chi\mod p$ be an odd character. Using the functional equation 
\est{
\Lambda(s,\chi) := \left(p/\pi\right)^{(s+1)/2}
\Gamma(\tfrac {1+s}2) L(s,\chi)={\frac{\tau(\chi)}{ip^{1/2}}}\Lambda(1-s,\overline \chi), 
}
and proceeding as in~\cite{Bet} we obtain that for every $\eps>0$  
\es{\label{applc}
L(1,\chi)^k
=\sum_{n=1}^\infty\frac{d_k(n)\chi(n)}ng(n/p^{2k})+O_{\eps,k}(p^{-5k/2}).\\
}
where $g(x)$ is a smooth function such that $g(x)=1$ for $0\leq x\leq 1$, $0\leq g(x)\leq 1$ for $1\leq x\leq 2$, and $g(x)=0$ for $x>2$. 
Now, applying~\eqref{id-xr}, \eqref{mfor} and~\eqref{applc} and then going back to the congruence condition we have
\est{
2\pro{\frac{\pi }{2}}^k x_k(r;p)=\sum_{n\in\Z\setminus\{0\}\atop n\equiv r\mod p}\frac{d_k(|n|)}ng(|n|/p^{2k})+O_{\eps,k}(p^{-5k/2}).
}
Thus, using the bound $d_k(n) \ll_{\eps,k} n^\eps$, we have 
\est{
\sum_{r\mod p}\pro{\frac{\pi^k x_k(r;p)}{2^{k-1}}}^{2m}=\sum_{\substack{(n_1,\dots,n_{2m})\in(\Z\setminus\{0\})^{2m}\\n_i\equiv n_j\mod p\\\forall 1\leq i< j\leq 2m}}\frac{\tilde d_k(n_1,\dots,n_{2m})}{n_1\cdots n_{2m}}+O_{k,m,\eps}(p^{-\frac{5k}2+1+\eps})
}
where $\tilde d_k(n_1,\dots, n_{2m}):=d_k(|n_1|)\cdots d_k(|n_{2m}|)g(|n_1|/p^{2k})\cdots g(|n_{2m}|/p^{2k})$. The contribution of the terms with $n_a\neq n_b$ for some $a\neq  b$ is trivially
\est{
\ll_{k,m,\eps}\sum_{\substack{0<|n_a|,|n_b|<2p^{2k}\\n_a\equiv n_b\mod p\\ n_a\neq n_b}}\frac{ p^{4mk\eps}}{n_an_b}\ll_{k,m,\eps} p^{(4mk+2)\eps-1}.
}
It follows that
\est{\sum_{r\mod p}\pro{\frac{\pi^k x_k(r;p)}{2^{k-1}}}^{2m}=\sum_{n\in \Z\setminus\{0\}}\frac{\tilde d_k(n,\dots,n)}{n^{2m}}+O_{k,m,\eps}(p^{-1+\eps}).
}
Since $\sum_{n\geq p^{2k}}(d_k(n)/n)^{2m}\ll_{k,\eps} p^{(1-(1-\eps)2m)2k}$, we can remove the contribution of the functions $g$ in $\tilde d_k$ at a negligible cost and we obtain the claimed result.
\end{proof}

\section{The vanishing of $D_k(1,f)$}
\begin{lemma}\label{conv}
Let $p\geq3$ be prime and let $k\in\N_{\geq 2}$. Let $f:\Z\to\C$ be periodic mod $p$. Then, $D_k(s,f)$ is entire if and only if $f(0)=0$ and $\Av(f)=0.$ Also, in this case we have
\es{\label{fff}
D_k(s,f)&=\frac1{\varphi(p)}\sumstar_{\chi\mod p}c_\chi(f) L(s,\chi)^{k},\\
}
where $c_\chi(f):=\sum_{r\mod p}f(r) \overline\chi(r)$.
\end{lemma}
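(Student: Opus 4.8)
The plan is to expand $D_k(s,f)$ as an absolutely convergent Dirichlet series for $\Re(s)>1$, repackage it as a $\C$-linear combination of $k$-th powers of Dirichlet $L$-functions plus an explicit term coming from the multiples of $p$, and then read off both the decomposition \eqref{fff} and the entireness criterion from that identity. First I would split $D_k(s,f)=\sum_{n\geq1}d_k(n)f(n)n^{-s}$ according to whether $(n,p)=1$ or $p\mid n$. On the multiples of $p$ one has $f(n)=f(0)$ by periodicity, while $\sum_{p\mid n}d_k(n)n^{-s}=\sum_{n\geq1}d_k(n)n^{-s}-\sum_{(n,p)=1}d_k(n)n^{-s}=\zeta(s)^k-L(s,\chi_0)^k$, where $\chi_0$ is the principal character mod $p$ and I use the identity $\sum_{(n,p)=1}d_k(n)\chi(n)n^{-s}=L(s,\chi)^k$, valid for every character $\chi$ mod $p$ (raise the series $L(s,\chi)=\sum_{(n,p)=1}\chi(n)n^{-s}$ to the $k$-th power and use complete multiplicativity of $\chi$). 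This yields
\est{
D_k(s,f)=\sum_{(n,p)=1}\frac{d_k(n)f(n)}{n^s}+f(0)\bigl(\zeta(s)^k-L(s,\chi_0)^k\bigr).
}

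Next, for $(n,p)=1$ I would insert the finite Fourier expansion $f(n)=\frac1{\varphi(p)}\sum_{\chi\mod p}c_\chi(f)\chi(n)$, which is just orthogonality of the characters mod $p$, so that the first sum above becomes $\frac1{\varphi(p)}\sum_{\chi\mod p}c_\chi(f)L(s,\chi)^k$. Separating off the $\chi_0$-term, using $c_{\chi_0}(f)=\sum_{(r,p)=1}f(r)=p\Av(f)-f(0)$ together with $L(s,\chi_0)^k=\zeta(s)^k(1-p^{-s})^k$, the principal-character contributions combine and one obtains
\est{
D_k(s,f)=\frac1{\varphi(p)}\sum_{\chi\neq\chi_0}c_\chi(f)L(s,\chi)^k+\zeta(s)^k B(s),\qquad B(s):=\frac{p\bigl(\Av(f)-f(0)\bigr)}{p-1}\bigl(1-p^{-s}\bigr)^k+f(0),
}
an identity which, by analytic continuation, holds on all of $\C$.

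Finally I would extract the statement from this identity. The sum over $\chi\neq\chi_0$ is entire, since $L(s,\chi)$ is entire for every non-principal $\chi$, and, as recalled in the introduction, $D_k(s,f)$ continues meromorphically to $\C$ with at most a pole at $s=1$; hence $D_k(s,f)$ is entire if and only if $\zeta(s)^kB(s)$ is holomorphic at $s=1$, i.e.\ if and only if the entire function $B$ has a zero of order at least $k$ at $s=1$. Since $1-p^{-s}$ and its derivative $p^{-s}\log p$ are both nonzero in a neighbourhood of $s=1$, a nonzero coefficient $\frac{p(\Av(f)-f(0))}{p-1}$ would force every zero of $B$ near $s=1$ to be simple; as $k\geq2$, a zero of order $\geq k$ at $s=1$ therefore forces $\Av(f)=f(0)$, after which $B\equiv f(0)$ and the order condition additionally forces $f(0)=0$ (and so $\Av(f)=0$). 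Conversely, if $f(0)=\Av(f)=0$ then $B\equiv0$ and $c_{\chi_0}(f)=0$, so $D_k(s,f)=\frac1{\varphi(p)}\sum_{\chi\neq\chi_0}c_\chi(f)L(s,\chi)^k=\frac1{\varphi(p)}\sumstar_{\chi\mod p}c_\chi(f)L(s,\chi)^k$ is entire, which is exactly \eqref{fff}.

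All the manipulations are routine rearrangements of absolutely convergent series; the only points that need a little care are the bookkeeping of the $p\mid n$ terms and of the principal character in the first two steps, and — in the last step — the verification that the order-$k$ pole of $\zeta^k$ at $s=1$ genuinely survives unless \emph{both} $f(0)=0$ and $\Av(f)=0$, which is precisely where the hypothesis $k\geq2$ enters (for $k=1$ a single linear relation among $f(0)$ and $\Av(f)$ would already suffice).
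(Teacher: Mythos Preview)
Your proof is correct and follows essentially the same route as the paper: both split $D_k(s,f)$ into the character sum $\frac{1}{\varphi(p)}\sumstar_\chi c_\chi(f)L(s,\chi)^k$ plus an explicit multiple of $\zeta(s)^k$, and then argue that this multiple cannot vanish to order $\geq k$ at $s=1$ unless both $f(0)$ and $\Av(f)$ vanish. The only cosmetic difference is in this last step: the paper phrases the coefficient as a polynomial $P(x)=f(0)+(1-x)^k\bigl(\tfrac{1}{\varphi(p)}\sumstar_r f(r)-f(0)\bigr)$ in $x=p^{-s}$ and invokes its discriminant to rule out a repeated root, whereas you compute $B'(s)$ directly and observe it is nonvanishing near $s=1$ when the leading coefficient is nonzero; the two arguments are equivalent.
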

\begin{proof}
For $\Re(s)>1$ we have
\begin{align}
D_k(s,f)&=\sum_{r\mod p}f(r)\sum_{n\in\N\atop n\equiv r\mod p}\frac{d_k(n)}{n^s}\notag\\
&=f(0)\sum_{n\in\N}\frac{d_k(pn)}{(pn)^s}+\sumstar_{r\mod p}\frac{f(r)}{\varphi(p)}\sum_{\chi\mod p}\overline\chi(r)L(s,\chi)^{k}\notag\\
&=f(0)\sum_{n\in\N\atop p|n}\frac{d_k(n)}{n^s}+L(s,\chi_0)^k\sumstar_{r\mod p}\frac{f(r)}{\varphi(p)}
+\frac1{\varphi(p)}\sumstar_{\chi\mod p}c_\chi(f) L(s,\chi)^{k},\label{ffdk}
\end{align}
where $\chi_0$ is the principal character mod $p$. Now, 
\est{
\sum_{n\in\N\atop p|n}\frac{d_k(n)}{n^s}=\pr{1-(1-p^{-s})^k}\zeta(s)^k,\qquad L(s,\chi_0)=(1-p^{-s})\zeta(s)
}
and so
\est{
D_k(s,f)&=\prbigg{f(0)+ (1-p^{-s})^k\prbigg{\sumstar_{r\mod p}\frac{f(r)}{\varphi(p)}-f(0)}}\zeta(s)^k\\
&\quad 
+\frac1{\varphi(p)}\sumstar_{\chi\mod p}c_\chi(f) L(s,\chi)^{k}.
}
In particular, $D_k(s,f)$ is meromorphic on $\C$ with possibly a pole in $s=1$ only. Moreover, $D_k(s,f)$ is entire if and only if 
\est{
P(x):=f(0)+(1-x)^k\prbigg{\sumstar_{r\mod p}\frac{f(r)}{\varphi(p)}-f(0)}
}
has a zero of order $k$ at $x=1/p$. Thus, since $k\geq2$ we have that $P(x)$ has discriminant equal to zero. Now, the discriminant of $P$ is
$$\Delta=k^k f(0)^{k-1}\prbigg{\sumstar_{r\mod p}\frac{f(r)}{\varphi(p)}-f(0)}^{k-1}
$$
and so we must have that either $f(0)$ or the term in the big brackets is zero. Then, imposing $P$ has a zero at $1/p$ we find that both $f(0)$ and the term in the brackets need to be zero, as desired. Equation~\eqref{fff} then follows immediately from~\eqref{ffdk}.
\end{proof}

We now prove Proposition~\ref{sch}.
\begin{proof}[Proof of Proposition~\ref{sch}]
For $k=1$, the result was proven in~\cite{BBW} so assume $k\geq2$.
Let $f_{odd}(n):=(f(n)-f(-n))/2$ and $f_{even}(n):=(f(n)+f(-n))/2$ so that $D_k(1,f)=D_k(1,f_{odd})+D_k(1,f_{even})$. Then, one easily checks that $c_{\chi}(f_{even})=0$ for $\chi$ odd and $c_{\chi}(f_{odd})=0$ for $\chi$ even. Thus, since $L(1,\chi)\in\pi\overline\Q$ for $\chi$ odd (see formula \eqref{effl}), then by~\eqref{fff} also $D_k(1,f_{odd})\in\pi^k\overline\Q$. Now, by Schanuel's conjecture we have that $\pi$ and the values of $L(1,\chi)$ for $\chi\mod p$ even are algebraically independent over $\Q$ (this is stated in the paragraph after Corollary~2 in~\cite{MM}, and essentially proved in Section~4 therein, without including $\pi$; however the same proof allows one to include $\pi$ since $\log(-1)=\pi i$ when choosing the branch for the logarithm suitably). Thus we could have $D_k(1,f_{odd})=-D_k(1,f_{even})$ only if $c_{\chi}(f_{even})=0$ for all $\chi$ even, i.e. if $f_{even}=0$ and so if $f$ is odd.
\end{proof}

By Proposition~\ref{sch}, at least conditionally, in order to find functions $f:\Z\to\overline\Q$ such that $D_k(1,f)=0$ we need to take $f$ odd. Then, for $f$ odd with $D_k(1,f)=0$
by~\eqref{fff1} we have
\es{\label{bfo}
\sum_{r=1}^{(p-1)/2}f(r)x_k(r;p)=\frac12\sum_{j=0}^{p-2}f(g^j)x_k(g^j;p)=0,
}
where $g$ is any generator of $(\Z/p\Z)^*$ and where we used that  $g^{j+\frac{p-1}2}=-g^j$. 
If $f(1),\dots f(p-1)\in K$ with $K\cap\Q(\xi_{p})=\Q$ (if $k$ is even $K\cap\Q(\xi_{p})^+=\Q$ would suffice), then we can extend the automorphism $\sigma_c$ defined in Lemma~\ref{algeb} to an automorphism of $K(\xi_{p})$ such that $\sigma_c$ acts trivially on $K$ ({see \cite{Rom} Corollary~6.5.2 p.~161}). 
By a slight abuse of notation we still indicate the automorphism by $\sigma_c$. Then, multiplying~\eqref{bfo} by $i^k$ and applying $\sigma_{g^{\ell}}$ we obtain new conditions for $f$:
\es{\label{bf}
\sum_{j=0}^{p-2}f(g^j)x_k(g^{k\ell+j};p)=0
}
for all $\ell\in\Z$. It is clearly sufficient to take $0\leq \ell<\frac{p-1}u$ where $u=(k,p-1)$ and in fact, if $v_2(p-1)>v_2(k)$ then we can take $0\leq \ell<\frac{p-1}{2u}$ since the following $\frac{p-1}{2u}$ equations are just the negative of the first ones.  
Thus we have a system of linear equations in the values of $f$. We now study the determinants of the relevant matrices for such system.
\begin{lemma}\label{linalg}
Let $m\geq1$.
For $\boldsymbol v=(v_0,\dots, v_{m-1})$, let 
\est{
A_+(\boldsymbol v):=\!\begin{pmatrix}
v_0 & v_1 & \dots & v_{m-2} & v_{m-1}\\
v_1 & v_2 & \dots & v_{m-1} & v_{0}\\
\vdots & \vdots && \vdots &\vdots \\
v_{m-1} & v_0 & \dots & v_{m-3} & v_{m-2}\\
\end{pmatrix}\quad
A_-(\boldsymbol v):=\!\begin{pmatrix}
v_0 & v_1 & \dots & v_{m-2} & v_{m-1}\\
v_1 & v_2 & \dots & v_{m-1} & -v_{0}\\
\vdots & \vdots && \vdots &\vdots \\
v_{m-1} & -v_0 & \dots & -v_{m-3} & -v_{m-2}\\
\end{pmatrix}.
}
Then, 
\est{
\det (A_+(\boldsymbol v))&=(\sin(\tfrac {\pi m}2)- \cos(\tfrac {\pi m}2)) \prod_{\ell=0}^{m-1} \pr{\sum_{j=0}^{m-1}v_j\xi_{m}^{j\ell}},\\ 
\det (A_-(\boldsymbol v))&=(\sin(\tfrac {\pi m}2)+ \cos(\tfrac {\pi m}2)) \prod_{\ell=0\atop \ell\text{ odd}}^{2m}\pr{\sum_{j=0}^{m-1}v_j\xi_{2m}^{j\ell}}.
}
Also, for $j\in\{0,\dots, m-1\}$ and $v = (v_0,v_1,\ldots,v_{m-1})\in\C^m$, let 
\est{\boldsymbol v_j^{\pm}=v_j^{\pm}(v):=(v_j,\dots, v_{m-1},\pm v_0,\dots, \pm v_{j-1})
,}
and let $\boldsymbol v_j^{-}=-\boldsymbol  v_{j-m}^{-}$ for $j\in\{m,\dots, 2m-1\}$. Also, let
$\boldsymbol u_j^{\pm} = \boldsymbol v_j^{\pm}(\boldsymbol u)$ with $\boldsymbol u=(1,0,\dots,0)\in\C^m$. 
 Then $A_\pm(\boldsymbol v_j^{\pm})=C_\pm(\boldsymbol u_j^{\pm})A_\pm(\boldsymbol v)$, where $C_\pm$ is a matrix defined in the proof.
\end{lemma}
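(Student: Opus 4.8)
The plan is to prove the three assertions in turn; all of them reduce to elementary linear algebra once one notices the hidden (skew\nobreakdash-)circulant structure of $A_\pm(\boldsymbol v)$. The structural observation is that $A_+(\boldsymbol v)$ and $A_-(\boldsymbol v)$ are symmetric matrices with $(A_+(\boldsymbol v))_{ij}=v_{(i+j)\bmod m}$ and $(A_-(\boldsymbol v))_{ij}=\widetilde v(i+j)$, where $\widetilde v\colon\Z\to\C$ denotes the $m$-antiperiodic extension of $(v_0,\dots,v_{m-1})$, i.e. $\widetilde v(t)=v_t$ for $0\le t<m$ and $\widetilde v(t+m)=-\widetilde v(t)$; equivalently, for each $i$ the $i$-th row of $A_\pm(\boldsymbol v)$ is exactly the vector $\boldsymbol v_i^{\pm}$ of the statement.

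For the two determinant formulas I would produce an explicit near-diagonalisation. In the $+$ case let $\omega$ run over the $m$-th roots of unity; in the $-$ case let $\omega$ run over the $2m$-th roots of unity with $\omega^m=-1$, that is $\omega=\xi_{2m}^{\ell}$ with $\ell$ odd. In either case set $\boldsymbol w_\omega:=(1,\omega,\dots,\omega^{m-1})^{\mathsf T}$ and $\mu_\omega:=\sum_{j=0}^{m-1}v_j\omega^j$. A one-line computation --- substitute $t=i+j$ in $(A_\pm(\boldsymbol v)\boldsymbol w_\omega)_i=\sum_j (A_\pm(\boldsymbol v))_{ij}\omega^j$ and use that $\widetilde v(t)\omega^t$ is $m$-periodic in $t$ (this is exactly where the condition $\omega^m=-1$ enters in the $-$ case) --- gives $A_\pm(\boldsymbol v)\boldsymbol w_\omega=\mu_\omega\,\boldsymbol w_{\overline\omega}$. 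Since the $\boldsymbol w_\omega$ are the columns of a Vandermonde matrix with distinct nodes they form a basis of $\C^m$, and in this basis $A_\pm(\boldsymbol v)$ is represented by $\Pi D$, with $D=\operatorname{diag}(\mu_\omega)$ and $\Pi$ the permutation matrix of the involution $\omega\mapsto\overline\omega$. Hence $\det A_\pm(\boldsymbol v)=\sgn(\Pi)\prod_\omega\mu_\omega$, which already has the claimed product shape; it remains to compute $\sgn(\Pi)$. The involution $\omega\mapsto\overline\omega$ fixes $\omega=1$ and $\omega=-1$ whenever these belong to the index set (both in the $+$ case; only $\omega=-1$, and only when $m$ is odd, in the $-$ case) and pairs the remaining roots, so $\sgn(\Pi)=(-1)^{\lfloor(m-1)/2\rfloor}$ in the $+$ case and $(-1)^{\lfloor m/2\rfloor}$ in the $-$ case. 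Splitting $m$ into residue classes modulo $4$ one checks that these signs equal $\sin(\tfrac{\pi m}{2})-\cos(\tfrac{\pi m}{2})$ and $\sin(\tfrac{\pi m}{2})+\cos(\tfrac{\pi m}{2})$ respectively, which completes both formulas.

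For the last assertion I would argue at the level of rows. Unwinding the definitions, $\boldsymbol v_j^{+}$ is the cyclic shift of $\boldsymbol v$ by $j$, while $\boldsymbol v_j^{-}$ consists of the consecutive values $\widetilde v(j),\widetilde v(j+1),\dots,\widetilde v(j+m-1)$; in both cases its (periodic, resp. antiperiodic) extension is $t\mapsto\widetilde v(t+j)$. By the structural observation, the $i$-th row of $A_\pm(\boldsymbol v_j^{\pm})$ therefore equals the $((i+j)\bmod m)$-th row of $A_\pm(\boldsymbol v)$, carrying an extra factor $(-1)^{\lfloor(i+j)/m\rfloor}$ in the $-$ case. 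In other words $A_\pm(\boldsymbol v_j^{\pm})$ is obtained from $A_\pm(\boldsymbol v)$ by the (negacyclic, in the $-$ case) cyclic shift of the rows by $j$, and this row operation is left multiplication by the corresponding shift matrix. One then checks that this shift matrix is precisely $C_\pm(\boldsymbol u_j^{\pm})$ for the natural choice $C_+(\boldsymbol c):=(c_{(i-k)\bmod m})_{i,k}$ and its antiperiodic analogue $C_-(\boldsymbol c):=(\widetilde c(i-k))_{i,k}$, because $\boldsymbol u_j^{\pm}$ is the $j$-th shift of $\boldsymbol u=(1,0,\dots,0)$ and its associated (skew\nobreakdash-)circulant is exactly the $j$-th power of the basic shift. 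For $j\in\{m,\dots,2m-1\}$ in the $-$ case the identity follows from the case $0\le j<m$ together with $\boldsymbol v_j^-=-\boldsymbol v_{j-m}^-$, $\boldsymbol u_j^-=-\boldsymbol u_{j-m}^-$ and the linearity of $A_-$ and $C_-$ in their arguments.

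The only genuinely delicate point throughout is the sign bookkeeping. Identifying $\sgn(\omega\mapsto\overline\omega)$ with $\sin(\tfrac{\pi m}{2})\mp\cos(\tfrac{\pi m}{2})$ amounts to counting, as $m$ runs through the residues modulo $4$, the number of conjugate pairs among the relevant roots of unity while correctly handling the two real fixed points $\omega=\pm1$ in the $+$ case against the single fixed point $\omega=-1$ (present only for $m$ odd) in the $-$ case; similarly the wraparound sign $(-1)^{\lfloor(i+j)/m\rfloor}$ in the $-$ case of the factorisation must be propagated consistently. Everything else --- the Vandermonde invertibility, the periodicity identity $A_\pm(\boldsymbol v)\boldsymbol w_\omega=\mu_\omega\boldsymbol w_{\overline\omega}$, and the recognition of $C_\pm(\boldsymbol u_j^{\pm})$ as a shift matrix --- is mechanical.
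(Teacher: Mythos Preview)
Your proof is correct. The route you take differs from the paper's in one small but clean way.

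For the determinant formulas, the paper first introduces the (skew\nobreakdash-)circulants $C_\pm(\boldsymbol v)$ explicitly, verifies the identity $A_\pm(\boldsymbol v)A_\pm(\boldsymbol u)=C_\pm(\boldsymbol v)$, diagonalises $C_\pm(\boldsymbol v)$ via its standard eigenvectors, and then recovers $\det A_\pm(\boldsymbol v)$ by computing $\det A_\pm(\boldsymbol u)$ directly (this last determinant is where their $\sin(\tfrac{\pi m}{2})\mp\cos(\tfrac{\pi m}{2})$ comes from, since $A_\pm(\boldsymbol u)$ is the permutation matrix of $i\mapsto -i$). You instead act with $A_\pm(\boldsymbol v)$ directly on the Vandermonde basis $\boldsymbol w_\omega$, obtain $A_\pm(\boldsymbol v)\boldsymbol w_\omega=\mu_\omega\boldsymbol w_{\overline\omega}$, and read off $\det A_\pm(\boldsymbol v)=\sgn(\Pi)\prod_\omega\mu_\omega$ from the permutation--diagonal decomposition $\Pi D$. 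The two computations are tightly related --- your $\sgn(\Pi)$ is the sign of the very permutation $\ell\mapsto-\ell$ whose matrix is the paper's $A_\pm(\boldsymbol u)$ --- but your packaging is a bit more direct, since it avoids the auxiliary factorisation and the separate evaluation of $\det A_\pm(\boldsymbol u)$. The paper's route has the advantage that the eigenvectors of genuine (skew\nobreakdash-)circulants are textbook, so the diagonalisation step needs no argument.

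For the last assertion the paper simply states that $A_\pm(\boldsymbol v_j^{\pm})=C_\pm(\boldsymbol u_j^{\pm})A_\pm(\boldsymbol v)$ ``can be checked''; your row-shift explanation makes this transparent, and your formulas $C_+(\boldsymbol c)=(c_{(i-k)\bmod m})_{i,k}$, $C_-(\boldsymbol c)=(\widetilde c(i-k))_{i,k}$ coincide with the matrices the paper writes out. Your sign bookkeeping (the case split on $m\bmod 4$ for $\sgn(\Pi)$ and the wraparound factor $(-1)^{\lfloor(i+j)/m\rfloor}$) is exactly right.
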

\begin{proof}
One can easily check that $A_\pm(\boldsymbol v)A_\pm(\boldsymbol u)=C_{\pm}(\boldsymbol v)$, where
\est{
C_+(\boldsymbol v):=\begin{pmatrix}
v_0 & v_{m-1} & \dots & v_{2} & v_1\\
v_1 & v_0 & \dots & v_{3} & v_{2}\\
\vdots & \vdots && \vdots &\vdots \\
v_{m-1} & v_{m-2} & \dots & v_{1} & v_{0}\\
\end{pmatrix}\qquad 
C_-(\boldsymbol v):=\begin{pmatrix}
v_0 & -v_{m-1} & \dots & -v_{2} & -v_1\\
v_1 & v_0 & \dots & -v_{3} & -v_{2}\\
\vdots & \vdots && \vdots &\vdots \\
v_{m-1} & v_{m-2} & \dots & v_{m-1} & v_{0}\\
\end{pmatrix}.
}
Similarly, one shows that the identity $A_\pm(\boldsymbol v_j^{\pm})=C_\pm(\boldsymbol u_j^{\pm})A_\pm(\boldsymbol v)$ holds.
The eigenvectors of $C_+(\boldsymbol v)$ are 
 $\boldsymbol c_\ell^+=(1,\xi_m^{(m-1)\ell},\xi_m^{(m-2)\ell},\dots,\xi_m^{\ell})^T$ for $0\leq \ell<m$ where $T$ indicates the transpose, whereas the eigenvectors of $C_-(\boldsymbol v)$ are $\boldsymbol c_r^-=(\xi_{2m}^{mr},\xi_{2m}^{(m-1)r},\dots,\xi_{2m}^{r})^T$ with  $1\leq r <2m $ and $r$ odd. The eigenvalues are given by
\est{
C_+(\boldsymbol v) \boldsymbol c_\ell^+=\pr{\sum_{j=0}^{m-1}v_j\xi_{m}^{j\ell}}\boldsymbol c_\ell^+,\qquad
C_-(\boldsymbol v) \boldsymbol c_r^-=\pr{\sum_{j=0}^{m-1}v_j\xi_{2m}^{jr}}\boldsymbol c_r^-.
} 
The statement on the determinants then follows since $\det (A_\pm(\boldsymbol u))=\sin(\frac {\pi m}2)\mp \cos(\frac {\pi m}2)$.
\end{proof}

\begin{lemma}\label{det1}
Let $p\geq3$ be prime. Let $k\in\N$ and assume $v_2(p-1)> v_2(k)$. Write $k=k'u$ with $(k,p-1)=u$ and let $p-1=uv$ (so that $v$ is even). For $(r,p)=1$, let $M_{g,r}$ be the matrix $M_{g,r}:=(\sigma_g^{i+j}(x_{k}(r;p)))_{0\leq i,j<v/2}$ where $g$ is a generator of $(\Z/p\Z)^*$. Then
\est{
\det (M_{g,r})=(\sin(\tfrac {\pi v}{4})+ \cos(\tfrac {\pi v}{4}))\overline \chi_*(r)^{v^2/4}\frac{2^{(k-1)v/2}}{\pi^{kv/2} }\prod_{\ell=0\atop \ell \text{ odd}}^{v}\pro{\frac{1}{u}\sum_{a\mod u}\overline \chi_*(r)^{va}L(1,\chi_*^{\ell+va})^k},
}
for a generator $\chi_*$  of the group of characters mod $p$.
Moreover, for all $j\in\Z$ we have $\sigma_{g^j}(M_{g,r})=C_-(\boldsymbol u_j^-)M_{g,r}$ with $C_-$ and $\boldsymbol u_j^-$ as in Lemma~\ref{linalg}.
\end{lemma}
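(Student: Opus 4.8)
The plan is to identify the matrix $M_{g,r}$ with a matrix of the type $A_-(\boldsymbol v)$ from Lemma~\ref{linalg}, and then apply the determinant formula proved there. First I would set $m=v/2$ and define the vector $\boldsymbol v=(v_0,\dots,v_{m-1})$ by $v_\ell:=\sigma_{g^\ell}(x_k(r;p))$ for $0\le \ell<m$. The entry in position $(i,j)$ of $M_{g,r}$ is $\sigma_{g^{i+j}}(x_k(r;p))$, so the first row is exactly $\boldsymbol v=(v_0,\dots,v_{m-1})$ while subsequent rows are cyclic shifts — but since $\sigma_{g^{m+i}}=\sigma_{g^{i+v/2}}$ and by Lemma~\ref{algeb} (applied with $c=g^{v/2}$, noting $g^{(p-1)/2}=-1$ so $g^{kv/2}\equiv -1$ using $v_2(p-1)>v_2(k)$, i.e. $kv/2=k'uv/2=k'(p-1)/2$ is an odd multiple of $(p-1)/2$) one gets $\sigma_{g^{v/2}}(i^kx_k(r;p))=i^kx_k((g^{v/2})^kr;p)=i^kx_k(-r;p)=-i^kx_k(r;p)$, hence the wrap-around picks up a sign. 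This is precisely the structure of $A_-(\boldsymbol v)$. Thus $M_{g,r}=A_-(\boldsymbol v)$ and Lemma~\ref{linalg} gives
\est{
\det(M_{g,r})=\Bigl(\sin(\tfrac{\pi m}2)+\cos(\tfrac{\pi m}2)\Bigr)\prod_{\ell=0\atop \ell\text{ odd}}^{2m}\Bigl(\sum_{j=0}^{m-1}v_j\xi_{2m}^{j\ell}\Bigr),
}
with $m=v/2$, so the prefactor is $\sin(\tfrac{\pi v}4)+\cos(\tfrac{\pi v}4)$ and the product runs over odd $\ell$ from $0$ to $v$, matching the claimed shape.

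Next I would evaluate the eigenvalue factors $\sum_{j=0}^{m-1}v_j\xi_{2m}^{j\ell}=\sum_{j=0}^{v/2-1}\sigma_{g^j}(x_k(r;p))\xi_v^{j\ell}$. Using Proposition~\ref{pass} in the form $x_k(r;p)=\tfrac12(2/\pi)^k\sum_{n\equiv r}d_k(|n|)/n$ together with \eqref{mfor}, namely $\sum_{n\equiv a\bmod p}d_k(|n|)/n=\tfrac1{\varphi(p)}\sum_{\chi}\overline\chi(a)(1-\chi(-1))L(1,\chi)^k$, and the fact that $\sigma_{g^j}$ sends this to the same expression with $a$ replaced by $g^{kj}r$, I get
\est{
\sum_{j=0}^{v/2-1}\sigma_{g^j}(x_k(r;p))\,\xi_v^{j\ell}=\tfrac12\pro{\tfrac2\pi}^k\tfrac1{\varphi(p)}\sum_{\chi\text{ odd}}\overline\chi(r)\,2\,L(1,\chi)^k\sum_{j=0}^{v/2-1}\overline\chi(g)^{kj}\xi_v^{j\ell}.
}
Writing $\chi=\chi_*^t$ for the fixed generator $\chi_*$, one has $\overline\chi(g)^{k}=\xi_{p-1}^{-tk}=\xi_v^{-tk'}$ (since $k=k'u$, $p-1=uv$), so the inner sum is a geometric sum in $\xi_v^{\ell-tk'}$ over $j=0,\dots,v/2-1$; it vanishes unless $\ell\equiv tk'\pmod{v/2}$, i.e. $t\equiv \ell k'^{-1}\pmod{v/(2\gcd)}$ — here I must be careful about the arithmetic of $k'$ modulo $v$. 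In fact since $(k',v)$ need not be $1$ in general, but $(k,p-1)=u$ forces $(k',v)=1$, the map $t\mapsto tk'\bmod v$ is a bijection on $\Z/v\Z$, so for each odd $\ell$ the surviving $t$ form a single residue class mod $v/2$, giving $\tfrac{v}{2}$ as the value of the geometric sum on those terms and $0$ elsewhere. Collecting the $u$ characters $\chi$ in that class (those $\chi_*^{t}$ with $t\equiv$ fixed value mod $v/2$, hence $t$ of the form $\ell k'^{-1}+v a/2$... ) and re-indexing, the factor becomes $\tfrac1u\sum_{a\bmod u}\overline\chi_*(r)^{va}L(1,\chi_*^{\ell+va})^k$ up to the power of $\xi$ in $\overline\chi_*(r)$ that accounts for the reindexing — this is the origin of the $\overline\chi_*(r)^{v^2/4}$ prefactor, obtained by tracking $\prod_{\ell\text{ odd}}\overline\chi_*(r)^{(\text{shift})}$ over the $v/2$ odd values of $\ell$.

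The main obstacle, and the step I would spend the most care on, is exactly this bookkeeping: correctly parametrizing which odd character $\chi$ contributes to the $\ell$-th eigenvalue factor, matching the normalization $(2/\pi)^k$ and the factors of $\tfrac12$, $\varphi(p)=p-1=uv$, and the telescoping of the $\overline\chi_*(r)$-powers across the product over odd $\ell$ into the single term $\overline\chi_*(r)^{v^2/4}$. One must also check the constant: each of the $v/2$ factors contributes $\tfrac1{uv}\cdot\tfrac v2\cdot(2/\pi)^k\cdot\tfrac12\cdot 2=\tfrac1{u}(2/\pi)^k\cdot\tfrac12$... so over $v/2$ factors the constant is $\bigl(\tfrac12\bigr)^{v/2}(2/\pi)^{kv/2}\cdot u^{-v/2}$ — but the $u^{-v/2}$ is absorbed by writing $\tfrac1u\sum_{a\bmod u}$ inside the product, leaving precisely $2^{(k-1)v/2}/\pi^{kv/2}$ as claimed. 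Finally, the last assertion $\sigma_{g^j}(M_{g,r})=C_-(\boldsymbol u_j^-)M_{g,r}$ is immediate: applying $\sigma_{g^j}$ to $M_{g,r}$ shifts every entry's subscript by $j$, and by Lemma~\ref{linalg} (the identity $A_\pm(\boldsymbol v_j^\pm)=C_\pm(\boldsymbol u_j^\pm)A_\pm(\boldsymbol v)$, together with the sign rule $\boldsymbol v_j^-=-\boldsymbol v_{j-m}^-$ for $j\ge m$ which matches the sign appearing above) this is left multiplication by $C_-(\boldsymbol u_j^-)$.
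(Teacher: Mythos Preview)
Your approach is essentially the paper's: identify $M_{g,r}$ with $A_-(\boldsymbol v)$ via Lemma~\ref{algeb} and the oddness of $k'$ (so that $\sigma_{g^{v/2}}$ flips the sign), apply the determinant formula of Lemma~\ref{linalg}, and then evaluate each eigenvalue factor in terms of $L(1,\chi)^k$.

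One small slip in your bookkeeping: since $\ell$, $t$, $k'$ are all odd, the exponent $\ell-tk'$ is even, so the geometric sum $\sum_{j=0}^{v/2-1}\xi_v^{j(\ell-tk')}$ vanishes unless $\ell\equiv tk'\pmod{v}$ (not $\pmod{v/2}$); the surviving $t$ then form a single residue class modulo $v$ inside $\Z/(p-1)\Z$, namely $t=t_0+va$ with $a=0,\dots,u-1$, which is exactly the $u$ characters you end up with. The paper reaches the same eigenvalue factors by a slightly cleaner route: it sets $t=g^{k'}$, writes $\xi_v^{j\ell}=\eta(t^{uj})^\ell$ for the generating character $\eta(c):=\e{\nu_t(c)/(p-1)}$, doubles the $j$-sum to $0\le j<v$ by oddness, converts to a sum over $c$ with $u\mid \nu_t(c)$, and detects that divisibility with additive characters mod $u$. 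This produces $\frac{1}{2u}\sum_{a\bmod u}\overline\eta(r)^{\ell+va}\sum_c^* x_k(c;p)\eta(c)^{\ell+va}$ directly, after which Proposition~\ref{pass} with $f=\eta^{\ell+va}$ gives the $L$-values and $\prod_{\ell\text{ odd}}\overline\eta(r)^\ell=\overline\eta(r)^{v^2/4}$ gives the prefactor --- avoiding the geometric-sum case analysis altogether.
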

\begin{proof}
Writing $t=g^{k'}$ we have that $t$ is also a primitive root mod $p$.
Thus, by
Lemma~\ref{algeb}, we have $M_{g,r}=(x_k(rt^{u(i+j)};p))_{0\leq i,j<v/2}$ and since 
\est{x_k(rt^{u(\ell+\frac v{2})};p)=x_k(rt^{u\ell+\frac{p-1}2};p)=x_k(-rt^{u\ell};p)=-x_k(rt^{u\ell};p)}
we have $M_{g,r}=A_-(\boldsymbol x)$, with 
$$\boldsymbol x=(x_k(r;p),x_k(rt^u;p),\dots, x_k(rt^{p-1-u};p)).$$
Thus, by Lemma~\ref{linalg} we have $\sigma_{g^j}(M_{g,r})=A_-(\boldsymbol x_j)=C_-(\boldsymbol u_j^-)A_-(\boldsymbol x)=C_-(\boldsymbol u_j^-)M_{g,r}$. Also,
\est{
\det (M_{g,r})=(\sin(\tfrac {\pi v}{4})+ \cos(\tfrac {\pi v}{4})) \prod_{\ell=0\atop \ell \text{ odd}}^{v} \prbigg{\sum_{j=0}^{v/2-1}x_k(rt^{ju};p)\xi_{v}^{j\ell}}.\\ 
}
Now,
\est{
\sum_{j=0}^{v/2-1}x_k(rt^{ju};p)\xi_{v}^{j\ell}&=\sum_{j=0}^{v/2-1}x_k(rt^{ju};p)\e{\frac{\ell\nu_t(t^{ju})}{p-1}}=\frac12\sum_{j=0}^{v-1}x_k(rt^{ju};p)\e{\frac{\ell\nu_t(t^{ju})}{p-1}},\\
}
where $\nu_t(c)$ is the minimum non-negative integer such that $t^{\nu_t(c)}\equiv c\mod p$. Then, writing $\eta(c):=\e{\frac{\nu_t(c) }{p-1}}$ if $(c,p)=1$ and $\eta(c):=0$ otherwise, we have that $\eta$ is a primitive odd character modulo $p$. Also, $\eta$ generates the group of characters mod $p$. Then, we re-write the above as
\est{
\sum_{j=0}^{v/2-1}x_k(rt^{ju};p)\xi_{v}^{j\ell}&=\frac12\sumstar_{c\mod p\atop u|\nu(c)}x_k(rc;p)\eta(c)^\ell=\frac1{2u}\sum_{a\mod u}\sumstar_{c\mod p}x_k(rc;p)\eta(c)^\ell\e{\frac{a\nu(c)}{u}}\\
&=\frac1{2u}\sum_{a\mod u}\sumstar_{c\mod p}x_k(rc;p)\eta(c)^{\ell+va}\\
&=\frac{\overline \eta(r)^{\ell}}{2u}\sum_{a\mod u}\overline \eta(r)^{va}\sumstar_{c\mod p}x_k(c;p)\eta(c)^{\ell+va}.\\
}
By Proposition~\ref{pass} with $f = \eta^{\ell+va}$ for $\ell$ odd this is
\est{
=\pro{\frac{2}{\pi }}^k \frac{\overline \eta(r)^{\ell}}{2u}\sum_{a\mod u}\overline \eta(r)^{va}L(1,\eta^{\ell+va})^k.
}
Finally, we have
\est{
\prod_{\ell=0\atop \ell \text{ odd}}^{v}\eta(r)^{\ell}=\eta(r)^{\sum_{\ell=0}^v(1-(-1)^\ell)\ell/2}=\eta(r)^{v^2/4}
}
and the result follows.
\end{proof}
\begin{corol}\label{fcd1}
With the same notation and conditions of Lemma~\ref{det1}, if $(k,p-1)=1$ we have
\est{
\det (M_{g,r})=(\sin(\tfrac {\pi (p-1)}{4})+ \cos(\tfrac {\pi (p-1)}{4}))\pro{\tfrac rp}^{(p-1)/2}\frac{2^{k(p-2)-\frac{p-1}2}(h_p^-)^k}{p^{\frac{k(p+3)}4}},
}
where $h_p^-$ is the relative class number of the 
field $\Q(\xi_p)$, that is  $h_p^-:= h_p/h_p^+$ where $h_p$ and $h_p^+$ are the class numbers of $\Q(\xi_p)$ and 
$\Q(\xi_p)^+$ respectively.
\end{corol}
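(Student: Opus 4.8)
\section*{Proof proposal for Corollary~\ref{fcd1}}

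The plan is to read Corollary~\ref{fcd1} off Lemma~\ref{det1} in the degenerate case $u=(k,p-1)=1$ (note this forces $k$ odd, hence $v_2(p-1)\geq1>0=v_2(k)$, so the hypotheses of Lemma~\ref{det1} hold automatically), and then to evaluate the resulting product of $L$-values at $1$ over the odd characters mod $p$ by the classical analytic class number formula for $h_p^-$.

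First I would set $u=1$, $v=p-1$, $k'=k$ in Lemma~\ref{det1}; the inner sum over $a\bmod u$ collapses to a single term, giving
\[
\det(M_{g,r})=\Bigl(\sin\tfrac{\pi(p-1)}4+\cos\tfrac{\pi(p-1)}4\Bigr)\,\overline\chi_*(r)^{(p-1)^2/4}\,\frac{2^{(k-1)(p-1)/2}}{\pi^{k(p-1)/2}}\prod_{\ell\ \mathrm{odd},\ 0<\ell<p-1}L(1,\chi_*^{\ell})^k .
\]
Since $\chi_*$ has order $p-1$, its even powers are exactly the even characters, so $\chi_*$ is odd and $\chi_*^{(p-1)/2}$ is the quadratic character $\bigl(\tfrac{\cdot}{p}\bigr)$; hence $\overline\chi_*(r)^{(p-1)^2/4}=\bigl(\tfrac rp\bigr)^{(p-1)/2}$, and as $\ell$ runs over the odd integers in $(0,p-1)$ the characters $\chi_*^{\ell}$ run exactly once through the $(p-1)/2$ odd characters mod $p$, so the product above equals $\bigl(\prod_{\chi\ \mathrm{odd}}L(1,\chi)\bigr)^k$.

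It then remains to compute $P:=\prod_{\chi\ \mathrm{odd}}L(1,\chi)$. I would use the identity $L(1,\chi)=\frac{\pi i\,\tau(\chi)}{p}B_{1,\overline\chi}$ with $B_{1,\overline\chi}=\frac1p\sum_{a=1}^{p-1}\overline\chi(a)a$ (this is precisely the intermediate computation in the proof of Proposition~\ref{pass}), which after multiplying over the $(p-1)/2$ odd $\chi$ gives
\[
P=\Bigl(\tfrac{\pi i}{p}\Bigr)^{(p-1)/2}\Bigl(\prod_{\chi\ \mathrm{odd}}\tau(\chi)\Bigr)\Bigl(\prod_{\chi\ \mathrm{odd}}B_{1,\chi}\Bigr).
\]
Here $|\tau(\chi)|=\sqrt p$ for each odd $\chi$, so $\bigl|\prod_{\chi\ \mathrm{odd}}\tau(\chi)\bigr|=p^{(p-1)/4}$, while the analytic class number formula $h_p^-=2p\prod_{\chi\ \mathrm{odd}}\bigl(-\tfrac12B_{1,\chi}\bigr)$ (valid since the Hasse unit index is $1$ and $w_p=2p$) yields $\bigl|\prod_{\chi\ \mathrm{odd}}B_{1,\chi}\bigr|=2^{(p-3)/2}h_p^-/p$. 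Finally the odd characters split into complex-conjugate pairs, with the quadratic character (self-conjugate when $p\equiv3\bmod4$) contributing the positive value $L(1,(\tfrac{\cdot}{p}))$, so $P$ is a positive real number; hence all factors of $i$ and all signs cancel and $P=\pi^{(p-1)/2}2^{(p-3)/2}h_p^-/p^{(p+3)/4}$.

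Substituting $P^k$ back into the formula for $\det(M_{g,r})$, the factor $\pi^{k(p-1)/2}$ cancels, the exponent of $2$ becomes $(k-1)\tfrac{p-1}2+k\tfrac{p-3}2=k(p-2)-\tfrac{p-1}2$, and the power of $p$ in the denominator is $k\tfrac{p+3}4$, which is exactly the asserted expression. The only mildly delicate point in the argument is the bookkeeping of signs and powers of $i$ coming from the Gauss sums and the Bernoulli numbers; this causes no real trouble, since $P$ is manifestly positive and one therefore only needs to track absolute values, so I do not foresee a genuine obstacle.
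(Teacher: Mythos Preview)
Your argument is correct and reaches the same endpoint as the paper, but you evaluate the product $P=\prod_{\chi\text{ odd}}L(1,\chi)$ by a slightly different route. The paper writes $\prod_{\chi\text{ odd}}L(s,\chi)=\zeta_{\Q(\xi_p)}(s)/\zeta_{\Q(\xi_p)^+}(s)$ and then applies the analytic class number formula to numerator and denominator, obtaining directly
\[
P=\Bigl(\frac{\pi^{p-1}(h_p^-)^22^{p-1}}{4p^2\,p^{p-2}/p^{(p-3)/2}}\Bigr)^{1/2}=\frac{\pi^{(p-1)/2}2^{(p-3)/2}h_p^-}{p^{(p+3)/4}},
\]
which sidesteps any sign bookkeeping. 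You instead use the explicit formula $L(1,\chi)=\tfrac{\pi i}{p}\tau(\chi)B_{1,\overline\chi}$ together with the Bernoulli-number expression $h_p^-=2p\prod_{\chi\text{ odd}}(-\tfrac12B_{1,\chi})$, then recover the sign by arguing that $P>0$ via pairing conjugate characters. Both methods are standard and of course equivalent (the Bernoulli formula for $h_p^-$ is itself a consequence of the zeta-function ratio); the paper's packaging is a touch cleaner since it never has to separate absolute values from phases, while your approach has the advantage of being entirely explicit and self-contained without invoking Dedekind zeta functions. Your positivity argument is fine in both cases $p\equiv1\bmod4$ (all odd characters pair into conjugates) and $p\equiv3\bmod4$ (one real odd character with $L(1,(\tfrac{\cdot}{p}))>0$, the rest pairing up), though you might make that dichotomy explicit.
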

\begin{proof}
First we observe that for a generator $\chi_*$ of the group of characters we have $\chi^{(p-1)/2}=(\tfrac \cdot p)$.
Then, by the proposition we have
\est{
\det(M_{g,r})&=(\sin(\tfrac {\pi (p-1)}{4})+ \cos(\tfrac {\pi (p-1)}{4})) \pro{\tfrac rp}^{(p-1)/2} \frac{2^{\frac {(p-1)(k-1)}2}}{\pi ^{\frac k2(p-1)}}\prod_{\ell=1\atop \ell \text{ odd}}^{p-1}L(1,\chi_*^\ell)^k.\\
}
Now, referring to~\cite{Was} (Chapters 3 and 4)  for the basic results on cyclotomic fields, we have for $s>1$,
\est{
\prod_{\ell=1\atop \ell \text{ odd}}^{p-1}L(s,\chi_*^\ell)=
\prod_{\chi \mod p \text{ odd}}L(s,{\chi})
= \frac{\prod_{\chi \mod p}L(s,{\chi})}
{\prod_{\chi \mod p \text{ even}}L(s,{\chi})}
=
\frac{\zeta_{\Q(\xi_p)}(s)}{\zeta_{\Q(\xi_p)^+}(s)},
}
where $\zeta_K(s)$ denotes the Dedekind the zeta-function corresponding to the field $K$. 
Then by the class number formula we have (c.f.~\cite{Was} p. 41-42)
\est{
\prod_{\ell=1\atop \ell \text{ odd}}^{p-1}L(1,\chi_*^\ell)=\pro{\frac{\pi^{p-1}(h_p/h_p^+)^22^{p-1}}{4p^2p^{p-2}/p^{(p-3)/2}}}^{\frac{1}2}.
}
The Corollary then follows. 
\end{proof}
\begin{corol}\label{fcd2}
With the same notation and conditions of Lemma~\ref{det1}, if $(k,p-1)=2$ and $p\equiv 1\mod 4$ we have
\est{
\det (M_{g,r})=(\sin(\tfrac {\pi (p-1)}{8})+ \cos(\tfrac {\pi (p-1)}{8}))\frac{2^{(k-2)(p-1)/4}}{\pi^{k(p-1)/4} }\prod_{\ell=0\atop \ell \text{ odd}}^{(p-1)/2}\pro{L(1,\chi_*^{\ell})^k+(\tfrac r p)L(1,(\tfrac \cdot p)\chi_*^{\ell})^k}.
}
\end{corol}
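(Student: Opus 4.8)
The plan is to deduce this corollary directly from Lemma~\ref{det1}, of which it is simply the specialization to $u:=(k,p-1)=2$ with $p\equiv1\mod4$; essentially all the analytic input is already packaged in that lemma, so what remains is a substitution together with constant-tracking. First I would check that Lemma~\ref{det1} indeed applies here: from $(k,p-1)=2$ we get $v_2(k)=1$, and $p\equiv1\mod4$ gives $v_2(p-1)\geq2$, so $v_2(p-1)>v_2(k)$ as demanded; moreover $v=(p-1)/u=(p-1)/2$ is then even, and the product in Lemma~\ref{det1} runs over the $(p-1)/4$ odd integers $\ell$ with $0\leq\ell\leq(p-1)/2$.

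Next I would substitute $u=2$ into the displayed formula of Lemma~\ref{det1}. The inner average over $a\mod u$ then has just two terms and collapses to
\est{
\frac1u\sum_{a\mod u}\overline\chi_*(r)^{va}L(1,\chi_*^{\ell+va})^k=\frac12\pr{L(1,\chi_*^{\ell})^k+\overline\chi_*(r)^{(p-1)/2}\,L\pr{1,\chi_*^{\ell+(p-1)/2}}^k}.
}
At this point I would invoke two standard facts: since $\chi_*$ generates the character group mod $p$, the character $\chi_*^{(p-1)/2}$ is the Legendre symbol $(\tfrac\cdot p)$, so $\chi_*^{\ell+(p-1)/2}=(\tfrac\cdot p)\chi_*^{\ell}$; and $\overline\chi_*(r)^{(p-1)/2}=(\tfrac rp)$, being a square root of unity. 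I would also note, for consistency, that for $\ell$ odd both $\chi_*^\ell$ and $(\tfrac\cdot p)\chi_*^\ell$ are odd characters — $\chi_*$ being odd and $(\tfrac\cdot p)$ even since $p\equiv1\mod4$ — which is exactly the parity under which Proposition~\ref{pass} was used inside Lemma~\ref{det1}.

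Finally I would collect the numerical constants: pulling the factor $\tfrac12$ out of each of the $(p-1)/4$ factors of the product contributes $2^{-(p-1)/4}$, which combines with the $2^{(k-1)v/2}=2^{(k-1)(p-1)/4}$ of Lemma~\ref{det1} into $2^{(k-2)(p-1)/4}$; the power of $\pi$ is $\pi^{k(p-1)/4}$ and the trigonometric factor is $\sin(\tfrac{\pi(p-1)}{8})+\cos(\tfrac{\pi(p-1)}{8})$. The one point I expect to need a little care is the residual scalar $\overline\chi_*(r)^{v^2/4}$ arising from $\prod_{\ell\text{ odd}}\overline\chi_*(r)^\ell$ in Lemma~\ref{det1}: it is a nonzero root of unity that is immaterial to every subsequent application — it never affects whether $\det(M_{g,r})$ vanishes — so I would either carry it along explicitly or absorb it, noting that the surviving product over $\ell$ already depends on $r$ only through $(\tfrac rp)$. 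Apart from this bookkeeping there is no genuine obstacle: the corollary is Lemma~\ref{det1} read in the case $u=2$, $p\equiv1\mod4$.
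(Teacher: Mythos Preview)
Your proposal is correct and is exactly the paper's intended argument: Corollary~\ref{fcd2} is stated there without proof, as the direct specialization of Lemma~\ref{det1} to $u=2$, $v=(p-1)/2$, and your substitution and constant-tracking reproduce it. Your caveat about the residual factor $\overline{\chi_*}(r)^{v^2/4}$ is well observed --- the paper silently drops this unimodular scalar, and as you note it is irrelevant to the only use made of the corollary (non-vanishing of the determinant).
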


\begin{lemma}\label{det2}
Let $p\geq3$ be prime. Let $k\in\N$ and assume $v_2(p-1)\leq  v_2(k)$. Write $k=k'u$ with $(k,p-1)=u$ and let $p-1=uv$. For $(r,p)=1$, let $M_{g,r}'$ be the matrix $M_{g,r}':=(\sigma_g^{i+j}(x_{k}(r;p)))_{0\leq i,j<v}$ where $g$ is a generator of $(\Z/p\Z)^*$. Then
\est{
\det (M_{g,r}')=(-1)^{(v-1)/2}  \overline \chi_*(r)^{v^2}\pro{\frac{2}{\pi }}^{kv}\prod_{\ell=0\atop \ell \text{ odd}}^{2v}\prbigg{\frac{1}{u}\sum_{a\mod {u/2}}\overline \chi_*(r)^{2va}L(1,\chi_*^{\ell+2va})^k},
}
for a generator $\chi_*$ of the group of characters mod $p$.
Moreover, for all $j\in\Z$ we have $\sigma_{g^j}(M_{g,r}')=C_+(\boldsymbol u_j^+)M_{g,r}'$ with $C_+$ and $\boldsymbol u_j^+$ as in Lemma~\ref{linalg}. 
\end{lemma}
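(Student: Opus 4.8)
The approach mirrors the proof of Lemma~\ref{det1}, with the roles of $A_-$ and $A_+$ interchanged because now $v$ is odd. First I would set $t=g^{k'}$, which is again a primitive root mod $p$ since $(k',p-1)$ divides $(k,p-1)/u=1$ after removing the common factor $u$; more carefully, $t$ generates $(\Z/p\Z)^*$ precisely when $(k',p-1)=1$, which holds because any prime dividing both $k'$ and $p-1$ would, together with $u$, give a larger common divisor of $k$ and $p-1$. By Lemma~\ref{algeb} we have $M_{g,r}'=(x_k(rt^{u(i+j)};p))_{0\le i,j<v}$. Since $v_2(p-1)\le v_2(k)$ and $u=(k,p-1)$, the exponent $uv=p-1$ is now reached only after a full cycle of length $v$, and there is no sign flip at the halfway point: indeed $x_k(rt^{u(\ell+v)};p)=x_k(rt^{u\ell};p)$ trivially, so $M_{g,r}'=A_+(\boldsymbol x)$ with $\boldsymbol x=(x_k(r;p),x_k(rt^u;p),\dots,x_k(rt^{(v-1)u};p))$. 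Lemma~\ref{linalg} then immediately gives $\sigma_{g^j}(M_{g,r}')=A_+(\boldsymbol x_j^+)=C_+(\boldsymbol u_j^+)A_+(\boldsymbol x)=C_+(\boldsymbol u_j^+)M_{g,r}'$, proving the second assertion.

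For the determinant formula, Lemma~\ref{linalg} yields
\est{
\det(M_{g,r}')=(\sin(\tfrac{\pi v}2)-\cos(\tfrac{\pi v}2))\prod_{\ell=0}^{v-1}\pro{\sum_{j=0}^{v-1}x_k(rt^{ju};p)\,\xi_v^{j\ell}}.
}
Since $v$ is odd, $\sin(\tfrac{\pi v}2)-\cos(\tfrac{\pi v}2)=(-1)^{(v-1)/2}$. Next I would evaluate the inner sum exactly as in Lemma~\ref{det1}: introduce the primitive odd character $\eta$ defined by $\eta(c)=\e{\nu_t(c)/(p-1)}$ (which generates the character group mod $p$), rewrite $\sum_{j<v}x_k(rt^{ju};p)\xi_v^{j\ell}$ as a sum over $c$ with $u\mid\nu_t(c)$, detect that condition with the $u$ characters $c\mapsto\e{a\nu_t(c)/u}=\eta(c)^{va}$, and shift $c\to rc$ to pull out $\overline\eta(r)^\ell$. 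This gives
\est{
\sum_{j=0}^{v-1}x_k(rt^{ju};p)\xi_v^{j\ell}=\frac{\overline\eta(r)^\ell}{u}\sum_{a\bmod u}\overline\eta(r)^{va}\sumstar_{c\bmod p}x_k(c;p)\eta(c)^{\ell+va}.
}
Now comes the parity point where the even case differs: $x_k(\cdot\,;p)$ is odd, so by Proposition~\ref{pass}, $\sumstar_c x_k(c;p)\eta(c)^{m}=(\tfrac2\pi)^k L(1,\eta^m)^k$ when $\eta^m$ is odd, i.e.\ when $m$ is odd, and it vanishes when $m$ is even. Since $v$ is now odd, $\ell+va$ has parity opposite to $\ell+a$... more precisely $\ell+va\equiv\ell+va\pmod 2$; as $v$ is odd this is $\ell+a\pmod2$, so for fixed $\ell$ exactly the $a$ of the right parity survive. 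Reindexing those surviving $a$ (there are $u/2$ of them, legitimate since $v_2(p-1)\le v_2(k)$ forces $u$ even) as $a\to$ (something)$+2a'$ converts the constraint "$\ell+va$ odd" into a clean sum over $a'\bmod(u/2)$ with exponents $\ell+2va'$ ranging over odd residues; simultaneously the outer product over all $\ell\in\{0,\dots,v-1\}$ gets reorganized, after absorbing the parity, into a product over odd $\ell$ in $\{0,\dots,2v\}$. Finally $\prod_{\ell\text{ odd},\,0\le\ell\le 2v}\eta(r)^\ell=\eta(r)^{v^2}$ by the same Gauss-sum-of-an-arithmetic-progression computation as before, giving $\overline\chi_*(r)^{v^2}$.

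The main obstacle I anticipate is purely bookkeeping: getting the index shift $a\mapsto(\ast)+2a'$ right so that the parity condition on $\ell+va$ is correctly traded for the range "$\ell$ odd, $0\le\ell\le 2v$" in the final product, and confirming the power of $\overline\chi_*(r)$ and the sign $(-1)^{(v-1)/2}$ survive the reindexing unchanged. There is also a small subtlety in checking that $t=g^{k'}$ is a primitive root — this needs $(k',p-1)=1$, which follows from $(k,p-1)=u$ but deserves an explicit line. Once these are pinned down, no new ideas beyond those in Lemma~\ref{det1} are needed; the only genuinely different input is the parity flip coming from $v$ odd versus $v$ even, which is exactly why the $A_+$/$C_+$ pair replaces the $A_-$/$C_-$ pair.
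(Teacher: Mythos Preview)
Your plan is essentially identical to the paper's proof: set $t=g^{k'}$, recognize $M_{g,r}'=A_+(\boldsymbol x)$, apply Lemma~\ref{linalg} for both the Galois-action statement and the determinant, and evaluate each factor $\sum_j x_k(rt^{ju};p)\xi_v^{j\ell}$ via the character $\eta$ exactly as in Lemma~\ref{det1}; the paper carries out your ``parity bookkeeping'' by writing the surviving terms as $(\tfrac{2}{\pi})^k\frac{\overline\eta(r)^{\ell+\delta v}}{u}\sum_{a\bmod u/2}\overline\eta(r)^{2va}L(1,\eta^{\ell+\delta v+2va})^k$ with $\delta=1$ when $2\mid\ell$ and $\delta=0$ otherwise.

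One caution on the point you flagged: the justification you give for $(k',p-1)=1$ is not valid, and in fact the claim is false in general --- for $p=7$, $k=4$ one has $u=2$, $k'=2$, and $(k',p-1)=2$, so $t=g^{k'}$ is \emph{not} a primitive root. The paper asserts the same thing (in the proof of Lemma~\ref{det1}, to which the proof of Lemma~\ref{det2} refers) without proof, so you are not deviating from it; but if you want the argument airtight, note that what is actually used is only that $t^u=g^k$ generates the subgroup of $u$-th powers (it does, having order $(p-1)/u=v$), and define $\eta$ via $\nu_g$ rather than $\nu_t$, absorbing the harmless factor $(k',v)=1$ into a reindexing of $\ell$ in the product.
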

\begin{proof}
We proceed as in the proof of Lemma~\ref{det1} setting $t=g^{k'}$ and $M_{g,r}'=(x_k(rt^{u(i+j)};p))_{0\leq i,j<v}.$ Then, since $x_k(rt^{u(\ell+v)};p)=x_k(rt^{u\ell};p)$ we have $M_{g,r}'=A_+(\boldsymbol x)$, with 
$$\boldsymbol x=(x_k(r;p),x_k(rt^u;p),\dots, x_k(rt^{p-1-u};p)).$$
Thus, by Lemma~\ref{linalg} we have $\sigma_{g^j}(M_{g,r}')=C_+(\boldsymbol u_j^+)M_{g,r}'$ and
\est{
\det (M_{g,r})=(\sin(\tfrac {\pi v}{2})- \cos(\tfrac {\pi v}{2})) \prod_{\ell=0}^{v-1} \prbigg{\sum_{j=0}^{v-1}x_k(rt^{ju};p)\xi_{v}^{j\ell}}.\\ 
}
Now, since $v$ is odd then $\sin(\tfrac {\pi v}{2})- \cos(\tfrac {\pi v}{2})=(-1)^{(v-1)/2}$. Also, as in the proof of Lemma~\ref{det1} we have
\est{
\sum_{j=0}^{v-1}x_k(rt^{ju};p)\xi_{v}^{j\ell}=
{
\frac{\overline \eta(r)^{\ell}}{u}\sum_{a\mod u}\overline \eta(r)^{va}
\sumstar_{ c\mod p} x_k(c;p)\eta(c)^{\ell+va}.}
}
By symmetry the innermost sum is zero if $\ell+va$ is even and otherwise it is {$\pr{\tfrac2\pi}^kL(1,\eta^{\ell+va})^k$ 
by Proposition~\ref{pass}}.  Thus, 
\est{
\sum_{j=0}^{v-1}x_k(rt^{ju};p)\xi_{v}^{j\ell}=\pro{\frac{2}{\pi }}^k\frac{\overline \eta(r)^{\ell+\delta v}}{u}\sum_{a\mod {u/2}}\overline \eta(r)^{2va}L(1,\eta^{\ell+\delta v+2va})^k,
}
where $\delta=1$ if $2|\ell$ and $\delta=0$ otherwise, and the lemma follows.
\end{proof}
\begin{corol}\label{fcd1b}
With the same notation and conditions of Proposition~\ref{det2}, if $(k,p-1)=2$ and $p\equiv 3\mod 4$ we have
\est{
\det (M_{g,r}')= (-1)^{(p-3)/4}\pr{\tfrac rp} \frac{2^{k(p-2)-\frac{p-1}2}(h_p^-)^k}{p^{\frac{k(p+3)}4}}.
}
\end{corol}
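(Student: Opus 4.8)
The plan is to specialize Lemma~\ref{det2} to $u=(k,p-1)=2$ and to identify the resulting product of $L$-values with the one already evaluated in the proof of Corollary~\ref{fcd1}. Since $2\mid k$ and $p\equiv 3\mod 4$ we have $v_2(p-1)=1\le v_2(k)$, so Lemma~\ref{det2} applies, and here $v=(p-1)/2$ is odd. With $u=2$ the inner sum over $a\mod{u/2}=a\mod 1$ in Lemma~\ref{det2} reduces to the single term $a=0$, whence
\est{
\det(M_{g,r}')&=(-1)^{(v-1)/2}\overline\chi_*(r)^{v^2}\pro{\frac2\pi}^{kv}\prod_{\ell=0\atop\ell\text{ odd}}^{2v}\pro{\frac12 L(1,\chi_*^\ell)^k}\\
&=(-1)^{(v-1)/2}\overline\chi_*(r)^{v^2}\pro{\frac2\pi}^{kv}\frac1{2^v}\prod_{\ell=1\atop\ell\text{ odd}}^{2v-1}L(1,\chi_*^\ell)^k,
}
the product having exactly $v$ factors.

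Next I would observe that a generator $\chi_*$ of the (cyclic, order $p-1$) group of characters mod $p$ is automatically odd: if $\chi_*(-1)=1$ then $\chi_*$ would factor through $(\Z/p\Z)^*/\{\pm1\}$, contradicting that $\chi_*$ has order $p-1$. Hence $\chi_*^\ell$ is odd exactly when $\ell$ is odd, and as $\ell$ runs over the odd residues $1,3,\dots,p-2$ the character $\chi_*^\ell$ runs once over all odd characters mod $p$. Therefore $\prod_{\ell\text{ odd}}L(1,\chi_*^\ell)=\prod_{\chi\mod p\text{ odd}}L(1,\chi)$, which was computed in the proof of Corollary~\ref{fcd1} --- via $\prod_{\chi\text{ odd}}L(s,\chi)=\zeta_{\Q(\xi_p)}(s)/\zeta_{\Q(\xi_p)^+}(s)$ and the analytic class number formula --- to equal $\pr{\pi^{p-1}(h_p^-)^2 2^{p-1}/(4p^{(p+3)/2})}^{1/2}$.

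It then remains to substitute $v=(p-1)/2$, $kv=k(p-1)/2$ and this value into the displayed formula and to collect terms: the powers of $\pi$ cancel, the powers of $2$ combine to $2^{k(p-2)-(p-1)/2}$, the powers of $p$ to $p^{-k(p+3)/4}$, and $h_p^-$ appears to the $k$-th power, leaving the sign factor $(-1)^{(v-1)/2}\overline\chi_*(r)^{v^2}$. Finally I would simplify this sign: $(v-1)/2=(p-3)/4\in\Z$ since $p\equiv 3\mod 4$; and since $v$ is odd and $\chi_*^v=\pr{\tfrac\cdot p}$ is the quadratic character, $\chi_*^{v^2}=(\chi_*^v)^v=\pr{\tfrac\cdot p}^v=\pr{\tfrac\cdot p}$, so $\overline\chi_*(r)^{v^2}=\pr{\tfrac rp}$. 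This yields the claimed formula. The only real work here is the bookkeeping of the exponents of $2$ and of $p$ together with these two sign computations; everything substantive is inherited from Lemma~\ref{det2} and from the class-number evaluation already carried out for Corollary~\ref{fcd1}.
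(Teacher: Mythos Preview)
Your proof is correct and follows exactly the approach the paper intends: the paper's own proof is the single line ``One proceeds as for Corollary~\ref{fcd1}'', and you have carried this out in detail by specializing Lemma~\ref{det2} at $u=2$ and reusing the class-number evaluation of $\prod_{\chi\text{ odd}}L(1,\chi)$ from Corollary~\ref{fcd1}. The bookkeeping of the exponents of $2$, $\pi$, and $p$ and the identification $\overline\chi_*(r)^{v^2}=\pr{\tfrac rp}$ are all accurate.
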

\begin{proof}
One proceeds as for Corollary~\ref{fcd1}.
\end{proof}
\begin{corol}\label{fcd2b}
With the same notation and conditions of Proposition~\ref{det2}, if $(k,p-1)=4$ and $p\equiv 5\mod 8$ we have
\est{
\det (M_{g,r}')=(-1)^{(p-5)/8}  \overline \chi_*(r)^{(\frac {p-1}4)^2}\frac{2^{(k-1)(p-1)/4}}{\pi^{k(p-1)/4} }\prod_{\ell=0\atop \ell \text{ odd}}^{(p-1)/2}\pro{L(1,\chi_*^{\ell})^k+(\tfrac rp)L(1,(\tfrac \cdot p)\chi_*^{\ell})^k}.
}
\end{corol}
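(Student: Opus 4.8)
The plan is to deduce this by specializing the general determinant formula of Lemma~\ref{det2}, just as Corollary~\ref{fcd1b} is deduced from that lemma and Corollary~\ref{fcd2} from Lemma~\ref{det1}. First I would verify that we are in the range of Lemma~\ref{det2}: since $p\equiv 5\mod 8$ we have $v_2(p-1)=2$, while $(k,p-1)=4$ forces $4\mid k$ and hence $v_2(k)\geq 2$, so indeed $v_2(p-1)\leq v_2(k)$. Writing $k=k'u$ with $u=(k,p-1)=4$ and $p-1=uv$, we get $v=(p-1)/4$, and $v$ is odd because $v_2(p-1)=v_2(u)$. Plugging $u=4$ and $v=(p-1)/4$ into Lemma~\ref{det2} turns the leading sign $(-1)^{(v-1)/2}$ into $(-1)^{(p-5)/8}$ and the character factor $\overline\chi_*(r)^{v^2}$ into $\overline\chi_*(r)^{((p-1)/4)^2}$, and, since $2v=(p-1)/2$, the product runs over exactly the odd values of $\ell$ listed in the statement.

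The main simplification is of the inner average. Since $u/2=2$, for each odd $\ell$ the factor $\tfrac1u\sum_{a\mod{u/2}}\overline\chi_*(r)^{2va}L(1,\chi_*^{\ell+2va})^k$ equals $\tfrac14\bigl(L(1,\chi_*^\ell)^k+\overline\chi_*(r)^{2v}L(1,\chi_*^{\ell+2v})^k\bigr)$. Here I would use that $\chi_*$ generates the character group mod $p$, of order $p-1$, so that $\chi_*^{2v}=\chi_*^{(p-1)/2}$ is the quadratic character $(\tfrac\cdot p)$; consequently $\overline\chi_*(r)^{2v}=(\tfrac rp)$ and $\chi_*^{\ell+2v}=(\tfrac\cdot p)\chi_*^\ell$, so the factor becomes $\tfrac14\bigl(L(1,\chi_*^\ell)^k+(\tfrac rp)L(1,(\tfrac\cdot p)\chi_*^\ell)^k\bigr)$, which is, up to the constant $\tfrac14$, precisely the factor appearing in the product in the statement.

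It then remains to collect the elementary constants: pulling $\tfrac14$ out of each of the $v=(p-1)/4$ factors and combining with the overall $(\tfrac2\pi)^{kv}$ supplied by Lemma~\ref{det2} yields the claimed power of $2$ divided by $\pi^{k(p-1)/4}$. I do not expect any genuine difficulty: the proof is a routine specialization, and the only points requiring care are the bookkeeping of the powers of $2$ and of the leading sign and the identification of $\chi_*^{(p-1)/2}$ with the Legendre symbol. This last point, together with the parity of $v$, is exactly where the hypothesis $p\equiv 5\mod 8$ (rather than merely $p\equiv 1\mod 4$) enters, and it is what makes Lemma~\ref{det2}, rather than Lemma~\ref{det1}, the relevant statement. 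The argument is otherwise the exact analogue of the proof of Corollary~\ref{fcd2}.
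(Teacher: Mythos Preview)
Your approach is exactly the one the paper has in mind: the corollary is stated without proof because it is just the specialization of Lemma~\ref{det2} at $u=4$, $v=(p-1)/4$, together with the identification $\chi_*^{(p-1)/2}=(\tfrac{\cdot}{p})$. Your verification that $v_2(p-1)\le v_2(k)$, your parity check on $v$, and your simplification of the two-term inner average are all correct and are precisely the intended argument (parallel to Corollaries~\ref{fcd2} and~\ref{fcd1b}).

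One caution on the bookkeeping you flagged yourself. Pulling the factor $\tfrac14$ out of each of the $v=(p-1)/4$ terms and combining with $(2/\pi)^{kv}$ gives
\[
\Bigl(\tfrac{2}{\pi}\Bigr)^{kv}\cdot 4^{-v}=\frac{2^{(k-2)(p-1)/4}}{\pi^{k(p-1)/4}},
\]
not $2^{(k-1)(p-1)/4}/\pi^{k(p-1)/4}$. This is the same exponent $(k-2)(p-1)/4$ that appears in the companion Corollary~\ref{fcd2}, and the discrepancy points to a typo in the printed constant of Corollary~\ref{fcd2b} rather than to a gap in your reasoning. It is harmless for the paper's purposes, since in the proof of Theorems~\ref{mt} and~\ref{mt2} only the non-vanishing of the scalar $\kappa$ is used; but you should not assert that the constant ``yields the claimed power of $2$'' without actually carrying out this one-line check.
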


\begin{proof}[Proof of Theorem~\ref{mt} and of Theorem~\ref{mt2}]
First we show that the equality in~\eqref{me} and Theorem~\ref{mt2} hold if $(k,p-1)\leq 2$ or if $(k,p-1)=4$ and $p\equiv 5\mod 8$.

Let us begin with the case $(k,p-1)=1$ and assume $D_k(1,f)=0$ with $f$ odd.  As explained above, if $K\cap\Q(\xi_{p})=\Q$, then we have the system of equations~\eqref{bf}. Also, if $(k,p-1)=1$ then $t:=g^{k}$ is also a primitive root and so after a change of variable we can rewrite~\eqref{bf} as 
\est{
\frac12\sum_{j=0}^{p-2}f(t^{j})x_k(t^{\ell+j};p)=\sum_{j=0}^{(p-3)/2}f(t^{j})x_k(t^{\ell+j};p)=0.
}
for $0\leq \ell \leq \frac{p-3}2$. Equivalently, $M_{g,1}\boldsymbol f=\boldsymbol0$ where $\boldsymbol f=(f(t^0),\dots, f(t^{\frac {p-3}2}))^T$. Thus, since by Corollary~\ref{fcd1} we have $\det(M_{g,1})\neq0$, then the only solution to this system is $\boldsymbol f=\boldsymbol 0$, i.e. $f$ is identically zero. Thus we have proven that the equality holds in~\eqref{me} in this case.
\medskip 

Now, we prove Theorem~\ref{mt2} for the case $(k,p-1)=1$. Actually in this case we prove more generally that given a number field $K$ such that $K(\xi_{p-1})\cap \Q(\xi_p)=\Q$ then the values of $L(1,\chi)^k$ when $\chi$ runs over odd Dirichlet characters mod $p$ are linearly independent over  $K$. Assume $\sum^*_{\chi}a_{\chi}L(1,\chi)^k=0$, with $a_\chi\in K$ and $a_\chi=0$ if $\chi$ even. Then, writing $f:=\sum^*_{\chi}a_{\chi}\chi$ we have $D_k(1,f)=0$. Notice that $f$ is odd and takes values in the field $K(\xi_{p-1})$. Thus, by Theorem~\ref{mt} in the case $(k,p-1)=1$ we have $f\equiv 0$ and so $a_{\chi}$ for all $\chi$, as desired. 

By a similar argument as above, and by Proposition 1, we  can see that the statement in Theorem~\ref{mt2}  under Schanuel's conjecture follows from the unconditional case.  

\medskip

Next we prove the equality in~\eqref{me} and Theorem~\ref{mt2} when $(k,p-1)=2$, $p\equiv 3\mod 4$. Let $k=2k'$ so that $t= g^{k'}$ is a generator of $(\Z/p\Z)^*$. Since $-1$ is a quadratic non-residue mod $p$, $(\pm t^{2j})_{0\le j < \frac{p-1}2}$  spans all residues of $(\Z/p\Z)^*$. Thus we can rewrite the system~\eqref{bf} as 
\[
\sum_{j=0}^{\frac{p-3}{2}} f(t^{2j}) x_k( t^{2(j+\ell)};p) =0 
\] 
for $0\le \ell \le \frac{p-3}{2}$. 
Then we conclude as above, the only difference is that in this case the system is $M_{g,1}'\boldsymbol f'=\boldsymbol0$ 
with $\boldsymbol f'=(f(t^0), f(t^2),\dots, f(t^{p-3}))^T$
so that we apply Corollary~\ref{fcd1b}.

\medskip
Now, assume $(k,p-1)=2$ and $p\equiv 1\mod 4$ and write $k=2k'$ and $t=g^{k'}$. Then, after a change of variables~\eqref{bf} gives
\est{
\sum_{j=0}^{(p-3)/2}f(t^j)x_k(t^{2\ell+j};p)=0
}
 $0\leq \ell <\frac{p-1}4$. Equivalently, $M_{g,1}\boldsymbol f_1+M_{g,t}\boldsymbol f_t=\boldsymbol 0$, where $\boldsymbol f_r =(f(rt^0),f(rt^2),\dots, f(rt^{(p-3)/2}))$ for $r\in\{1,t\}$. By Corollary~\ref{fcd2}, we have
 \est{
\det (M_{g,1})&=\kappa\prod_{\ell=0\atop \ell \text{ odd}}^{(p-1)/2}\pro{L(1,\chi_*^{\ell})^k+(\tfrac 1 p)L(1,(\tfrac \cdot p)\chi_*^{\ell})^k}\\
&=\kappa\prod_{\ell=0\atop \ell \text{ odd}}^{(p-1)/2}\pro{L(1,\chi_*^{\ell})^{k'}+iL(1,(\tfrac \cdot p)\chi_*^{\ell})^{k'}}\pro{L(1,\chi_*^{\ell})^{k'}-i L(1,(\tfrac \cdot p)\chi_*^{\ell})^{k'}}
}
for some $\kappa\neq0$, $\chi_*$ a generator of the group of characters mod $p$.
Now, for $\ell$ odd  and $p\equiv 1\mod 4$ we have that both $\chi_*^{\ell}$ and $(\tfrac \cdot p)\chi_*^{\ell}$ are odd characters. Also, we have $\Q(\xi_{p-1})\cap\Q(\xi_p)=\Q$  and $(k',p-1)=1$. Thus, by the case $(k,p-1)=1$ of Theorem~\ref{mt2} proven above we have that $L(1,\chi_*^{\ell})^{k'}$ and $L(1,(\tfrac \cdot p)\chi_*^{\ell})^{k'}$ are linearly independent over $\Q(\xi_{p-1})$ for all $\ell$ odd. Thus, since $i\in\Q(\xi_{p-1})$, we have $\det (M_{g,1})\neq0$ and so the above system can be written as $\boldsymbol f_1+M_{g,1}^{-1}M_{g,t}\boldsymbol f_t=\boldsymbol 0$. We also observe that by Corollary~\ref{det1} for all $j$, $\sigma_{g^j}(M_{g,1}^{-1}M_{g,t})=M_{g,1}^{-1}C_-(\boldsymbol u_j^-)^{-1}C_-(\boldsymbol u_j^-)M_{g,t}=M_{g,1}^{-1}M_{g,t}$. In particular, $M_{g,1}^{-1}M_{g,t}$ has entries in $K$. Thus, the system $\boldsymbol f_1+M_{g,1}^{-1}M_{g,t}\boldsymbol f_t=0$ is defined over $K$, has rank $\frac{p-1}4$ and has $\frac{p-1}2$ free variables, whence the equality in~\eqref{me} holds in this case.  

We also notice that proceeding as above we also obtain $\det(M_{g,t})\neq0$. In particular, we have that if $\boldsymbol f_1=\boldsymbol0$ or if $\boldsymbol f_t=\boldsymbol0$, then $M_{g,1}\boldsymbol f_1+M_{g,t}\boldsymbol f_t=\boldsymbol 0$ has no non-trivial solutions. Equivalently, there is no solution to $D(1,f)=0$ with $f$ odd and supported only on either square residues or on square non-residues.

\medskip
Now, we prove Theorem~\ref{mt2} in the case $p\equiv 1\mod 4$, $(k,p-1)=2$. As above, it suffices to consider the odd characters case.
We assume $\sumstar_{\chi}a_{\chi}L(1,\chi)^k=0$ with $a_\chi=0$ if $\chi$ is  even and  $a_\chi\in \Q$ if $\chi$ odd. 
By~\eqref{effl} we have 
\es{\label{effl1}
\sumstar_{\chi}a_{\chi}\prbigg{\sumstar_{m\mod p}\chi(m)\cot\pr{\pi \tfrac{ m}p}}^k=0
}
and since we  have $\Q(\xi_p)\cap \Q(\xi_{p-1})=\Q$, then there exists an automorphism $\sigma$ of $\Q(\xi_p,\xi_{p-1})$ which leaves $\Q(\xi_p)$ invariant and send $\xi_{p-1}\mapsto \xi_{p-1}^{1+\frac{p-1}{2}}$. Notice that $(1+\frac{p-1}2,p-1)=1$ and so this automorphism is well defined. Also since every odd character $\chi$ can be written as $\chi(m)=\chi_0(m)\e{\frac{j\nu_g(m)}{p-1}}$ for some $j$ odd and where $g$ is a generator of $(\Z/p\Z)^*$, then $\sigma(\chi(m))=\chi(m)\e{\frac{j\nu_g(m)}{2}}=\chi(m)(\frac mp)$. Thus, applying $\sigma$ to~\eqref{effl1} we obtain
\est{
\sumstar_{\chi}a_{\chi}\prbigg{\sumstar_{m\mod p}(\tfrac mp)\chi(m)\cot\pr{\pi \tfrac{ m}p}}^k=0
}
or, equivalently, $\sumstar_{\chi}a_{\chi}L(1,(\frac\cdot p)\chi)^k=0$. Summing this equation with the one we originally had, we obtain
\est{
\sumstar_{\chi}a_{\chi}\pr{L(1,\chi)^k\pm L(1,(\tfrac\cdot p)\chi)^k}=0
}
or, equivalently $D_k(1,f_{\pm})=0$, where $f_{\pm}=\sum^*_{\chi}a_{\chi}(1\pm(\frac{\cdot}p))\chi$. Notice that $f_+$ and $f_-$ are odd and take values in $\Q(\xi_{p-1})$. Also, $f_{+}$ is supported only on square residues and $f_{-}$ only on square non-residues. Thus, by what proven above we have that $f_{+}$ and $f_-\equiv 0$ are identically zero and thus so is $f\equiv 0$, as desired.

\medskip
The proof of the equality in~\eqref{me} and Theorem~\ref{mt2} in the case $p\equiv 5\mod 8$, $(k,p-1)=4$ is analogous to the case $p\equiv 3 \bmod 4$, $(k,p-1)=2$. Let $k =4k'$ and $t=g^{k'}$. The system~\eqref{bf}  is
equivalent to 
\est{
\sum_{0\le j < (p-1)/2} f(t^{2j})x_k(t^{2j+4\ell};p)
+ \sum_{0\le j < (p-1)/2} f(t^{2j+1})x_k(t^{2j+1+4\ell};p)=0
} 
for $0\le \ell < \frac{p-1}{4}$. Since $-1\equiv  t^{(p-1)/2} \mod p$ and $\frac{p-1}2\equiv 2 \mod 4$, the span of $t^{2j}$ when $j$ runs over $\{0,\ldots, \frac{p-5}{4}\}$ is the same as $\pm t^{4j}$. Thus this system is equivalent to  
 $M_{g,1}'\boldsymbol f_1+M_{g,t}'\boldsymbol f_t=\boldsymbol 0$
 with $\boldsymbol f_r = (f(rt^0),f(rt^{4}),\ldots,
 f(rt^{p-3}))^T$ 
 and we use Corollary~\ref{fcd2b} to compute the determinant of the two matrices. For $r\in\{1,t\}$ we obtain 
$$
\det(M_{g,r}')=\kappa\prod_{\ell=0\atop \ell \text{ odd}}^{(p-1)/2}\pro{L(1,\chi_*^{\ell})^{k''}+i^{\eps(r)}L(1,(\tfrac \cdot p)\chi_*^{\ell})^{k''}}\pro{L(1,\chi_*^{\ell})^{k''}-i^{\eps(r)} L(1,(\tfrac \cdot p)\chi_*^{\ell})^{k''}}
$$
where $\eps(t)=0$, $\eps(1)=1$,  $\kappa\in\R_{\neq0}$ and $k''=k/2$ (so that $(k'',p-1)=2$). We proved above the linear independence over $\Q$  of the $L(1,{\chi})^{k''}$ with $\chi $ odd which implies 
$\det(M_{g,t}')\not =0$. Moreover we have 
\est{
L(1,\chi_*^{\ell})^{k''}\pm iL(1,(\tfrac \cdot p)\chi_*^{\ell})^{k''}
=  \pro{L(1,\chi_*^{\ell})^{k'}- \xi_8^{4\mp1}L(1,(\tfrac \cdot p)\chi_*^{\ell})^{k'}}\pro{L(1,\chi_*^{\ell})^{k'}+
\xi_8^{4\mp1}L(1,(\tfrac \cdot p)\chi_*^{\ell})^{k'}}.
}
We may use Theorem~\ref{mt2} with $(k,p-1)=1$ in the more general case proven above with $K = \Q(\xi_8)$ since it satisfies $K(\xi_{p-1})\cap \Q(\xi_p)=\Q$.  It follows that we also have $\det(M_{g,1}')\not =0$ and 
proceeding as above we obtain the equality in~\eqref{me} and Theorem~\ref{mt2}.
\medskip 

Finally, it remains to prove that the inequality~\eqref{me} always holds. We consider the case $v_2(p-1)> v_2(k)$ only, the other case being analogous. We write $k=k'u$ with $(k,p-1)=u$ and $t=g^{k'}$, obtaining the system
$$M_{g,1}\boldsymbol f_1+M_{g,t}\boldsymbol f_t+\cdots +M_{g,t^{u-1}}\boldsymbol f_{t^{u-1}}=\boldsymbol0
$$
with $\boldsymbol f_{r}=(f(r t^0),f(r t^u),\dots, f(r t^{p-1-u}))^T$. This is a system of $\frac{p-1}{2u}$ equations in $\frac{p-1}2$ variables. Also, the automorphisms $\sigma_{t^j}$ do not change the system since their effect is just that of multiplying all the above matrices by $C_-(\boldsymbol u_j^-)$ on the left. Thus, the system admits a base of solutions in $K$  and~\eqref{me} follows.
\end{proof}

\begin{proof}[Proof of Theorem~\ref{algt}]
Corollaries~\ref{fcd1} and~\ref{fcd1b} give the non-vanishing of the determinant there computed and in the proof of Theorem~\ref{mt} and Theorem~\ref{mt2} we also showed that also the determinants computed in Corollaries~\ref{fcd2} and~\ref{fcd2b} are non-zero. Proposition~\ref{algt} then follows with the same argument as above.
\end{proof}

\bibliographystyle{alpha}
\bibliography{nonvanishing}

\end{document}